 \journalname{J. Dyn. Diff. Equat.}
\begin{document}

\title{On positive solutions and the Omega limit set for a class of delay differential equations%\thanks{Grants or other notes
%about the article that should go on the front page should be
%placed here. General acknowledgments should be placed at the end of the article.}
}

\titlerunning{Positive solutions of delay differential equation}        % if too long for running head

\author{Changjing Zhuge  \and   Xiaojuan Sun \and  Jinzhi Lei
}

%\authorrunning{Short form of author list} % if too long for running head

\institute{C. Zhuge,  X. Sun, J. Lei (\Letter{})\at
              Zhou Pei-Yuan Center for Applied Mathematics, Tsinghua University, Beijing 100084, P.R. China \\
              Tel.: +86-10-62795156\\
              Fax: +86-10-62797075\\
              \email{jzlei@mail.tsinghua.edu.cn}           %  \\
%             \emph{Present address:} of F. Author  %  if needed
}

\date{Received: date / Accepted: date}
% The correct dates will be entered by the editor

\maketitle

\begin{abstract}
This paper studies the positive solutions of a class of delay differential equations with two delays.  These equations originate from the modeling of hematopoietic cell populations. We give a sufficient condition on the initial function for $t\leq 0$ such that the solution is positive for all time $t>0$. The condition is ``optimal''. We also discuss the long time behavior of these positive solutions through a dynamical system on the space of continuous functions.  We give a characteristic description of the $\omega$ limit set of this dynamical system, which can provide informations about the long time behavior of positive solutions of the delay differential equation.

\keywords{Delay differential equation \and positive solution \and $\omega$-limit set}
% \PACS{PACS code1 \and PACS code2 \and more}
\subclass{34K90 \and 92D25}
\end{abstract}

\section{Introduction}
\label{sec:intro}
Delay differential equations are extensively used in modeling biological control systems, where the retardation usually originates from a maturation processes or finite signaling velocities \cite{Ben:04,Gou:05,Lei:07,Mackey:77}. In this paper, we will consider the delay differential equation of form
\begin{equation}
\label{eq:dde1}
\left\{
\begin{array}{ll}
\displaystyle\frac{d u}{d t} = -\gamma u + f(u(t-{\tau_0})) - f(u(t-\tau))e^{-\gamma (\tau - \tau_0)},\ \ & t > 0,\\
u(t) = \phi(t), & -\tau \leq t \leq 0.
\end{array} \right.
\end{equation}
where $\tau>\tau_0>0$, $f\in C(\mathbb{R}^+, \mathbb{R}^+)$ and $\phi(t)>0$.  This type of equation has been used to describe the dynamics of circulating blood cells \cite{Lei:2010}. For obvious biological reasons, we are only interested in positive solutions of the equation. In this paper, we consider conditions on  the initial function $\phi(t)$ such that the equation \eqref{eq:dde1} has positive solutions for all $t>0$, and the long time behavior of these  positive solutions.

The present work was motivated by investigating the dynamics of   a mathematical model of hematopoiesis \cite{Colijn:05a,Colijn:05b,Colijn:2007}. This model consists of  a set of four nonlinear delay differential equations, describing  the dynamics of proliferation and differentiation of hematopoietic stem cells, production of the three major types of circulating cells, leukocytes (white blood cells), erythrocytes (red blood cells) and platelets, and the feedback regulation of proliferation and differentiation  \cite{Colijn:05a,Colijn:05b}. These delay differential equations are obtained from age-structured models of each cell line using the method of characteristic line. For this set of equations, only positive solutions are biologically possible.  In particular, the long time behavior of this system can be described by the $\omega$-limit set of these solutions, which is important for understanding the possible states of a system under given initial conditions.

Bifurcations and bistability of the model of hematopoietic regulation have been studied numerically in \cite{Ber:03,Colijn:2007,Lei:2010}. However, in numerical simulations we found that, in some cases, solutions of the delay differential equation system with a positive initial function can become negative at some time $t>0$. This indicates that the delay differential equation model is not equivalent to the original age-structured model, which always yield positive solutions. The negative populations usually occur for erythrocytes and platelets, but not for stem cells and leukocytes. This is because the circulating erythrocytes and platelets are actively destroyed at a fixed time from their
entering the circulating component \cite{Belar:1987,Mah:98}. In out recent study \cite{Lei:2010}, we proposed a set of initial functions such that the solutions are always positive. The current study provides a theoretical foundation for the findings in \cite{Lei:2010}.

Equation \eqref{eq:dde1} can be obtained from a model of the population dynamics of erythrocytes or platelets \cite{Lei:2010}. In this model $u(t)$ measures the population of circulating cells. The cell is produced from the differentiation of stem cells and amplification of precursor cells at a rate $f(u(t))$. After differentiation, the cells undergo a stage of maturation of duration $\tau_0$, and then enter into the circulation. The circulating cells are lost at a rate $\gamma$, and are actively destroyed at a fixed time $(\tau-\tau_0)$ from their time of entry the circulating compartment.

This paper is organized as follows.  In section \ref{sec:PositiveSoln}, we prove a sufficient condition for the existence of positive solutions of \eqref{eq:dde1}. At the end of this section, we will show that the condition is ``optimal'' by an example.  In section \ref{sec:omega}, we will introduce an iterated map on the space of continuous functions and study the $\omega$-limit set of positive solutions of \eqref{eq:dde1} through this map. The paper concludes with an example in section \ref{sec:conclusion}.

\section{Sufficient condition for positive solutions}
\label{sec:PositiveSoln}

In this section, we develop a sufficient condition for the initial function $\phi(t)$ such that the solution of equation \eqref{eq:dde1} is positive for all time $t>0$.

Throughout this paper, we define an operator $H : C([-\tau,+\infty),\mathbb{R}^+)\times\mathbb{R}^+\rightarrow\mathbb{R}$ such that
\begin{equation}
\label{eq:def H}
  H(u,t)= u(t)-\displaystyle\int_{\tau_0}^{\tau}f(u(t-a))e^{-\gamma(a-\tau_0)}da,
\end{equation}
and let $H_0:C([-\tau,+\infty),\mathbb{R}^+)\rightarrow\mathbb{R}$ be defined as
\begin{equation}\label{eq:def H0}
H_0(u)=H(u,0) = u(0) - \int_{\tau_0}^\tau f(u(-a))e^{-\gamma (t-\tau_0)} d a.
\end{equation}

\begin{theorem}
\label{th:1}
Consider the equation \eqref{eq:dde1}.
If the initial condition $\phi(t)$
satisfies
\begin{equation}\label{eq:cond1}
H_0(\phi)\geq0,
\end{equation}
then \eqref{eq:dde1} has a unique solution $u(t)$ in $t\in[-\tau,\infty)$, and
the solution is positive for all $t> 0$.
\end{theorem}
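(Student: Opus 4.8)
The plan is to establish existence, uniqueness, and positivity \emph{simultaneously} by the method of steps, with the central device being an exact identity satisfied by the operator $H$ along solutions. Since both delays are bounded below by $\tau_0>0$, on any interval $[n\tau_0,(n+1)\tau_0]$ the delayed arguments $t-\tau_0$ and $t-\tau$ both lie in $[-\tau,n\tau_0]$ (here I use $\tau>\tau_0$), so the right-hand side of \eqref{eq:dde1} reduces to $-\gamma u$ plus a continuous function of already-known data. This is a linear scalar ODE, solvable uniquely on each step, provided $f$ can be evaluated on the delayed values, which requires those values to be nonnegative. Hence existence and positivity cannot be separated: I would run an induction whose hypothesis is that $u$ exists and $u>0$ on $[-\tau,n\tau_0]$, the base case $n=0$ being $u=\phi>0$.

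The crux is the identity $\frac{d}{dt}H(u,t)=-\gamma H(u,t)$ along any solution. I would obtain it by substituting $b=t-a$ in the integral in \eqref{eq:def H}, writing $\int_{\tau_0}^{\tau}f(u(t-a))e^{-\gamma(a-\tau_0)}\,da = e^{-\gamma(t-\tau_0)}\int_{t-\tau}^{t-\tau_0}f(u(b))e^{\gamma b}\,db$, then differentiating by the Leibniz rule. The boundary terms produce exactly $f(u(t-\tau_0))-f(u(t-\tau))e^{-\gamma(\tau-\tau_0)}$, while differentiating the prefactor produces $-\gamma$ times the integral; subtracting this from $u'(t)$ as given by \eqref{eq:dde1}, the two nonlinear terms cancel and only $-\gamma(u-\text{integral})=-\gamma H(u,t)$ survives. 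Integrating, $H(u,t)=H_0(\phi)e^{-\gamma t}$, so the hypothesis \eqref{eq:cond1} gives $H(u,t)\ge 0$ for all $t\ge 0$. Verifying this cancellation cleanly is the key computational step.

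Positivity then follows by feeding this back into the definition of $H$: for every $t$ in the current step, $u(t)=H_0(\phi)e^{-\gamma t}+\int_{\tau_0}^{\tau}f(u(t-a))e^{-\gamma(a-\tau_0)}\,da$, where all arguments $t-a$ lie in $[-\tau,n\tau_0]$. By the inductive hypothesis $u>0$ there, so $f(u(t-a))\ge0$ and the integral is nonnegative; hence $u(t)\ge H_0(\phi)e^{-\gamma t}\ge0$, which both keeps $u$ in the domain of $f$ (closing the induction for existence) and yields nonnegativity on the extended interval.

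I expect the genuine obstacle to be upgrading $\ge0$ to strict positivity. The clean subcase is $H_0(\phi)>0$, where $u(t)\ge H_0(\phi)e^{-\gamma t}>0$ immediately. When $H_0(\phi)=0$ I would argue by contradiction: if $t^*>0$ is the first zero, then $u>0$ on $[-\tau,t^*)$ forces the nonnegative integral at $t^*$ to vanish, i.e. $f(u(s))\equiv0$ on $[t^*-\tau,t^*-\tau_0]$; combined with $\phi(0)=\int_{\tau_0}^{\tau}f(\phi(-a))e^{-\gamma(a-\tau_0)}\,da>0$ coming from $H_0(\phi)=0$, together with the continuity of $f$ and $u$ and the strict positivity of $\phi$, I would derive an inconsistency. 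Making this last step airtight, that is, ruling out that $f$ happens to vanish precisely on the relevant delayed window, is where I expect to spend the most effort.
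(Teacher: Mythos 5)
Your proof is, in substance, the paper's proof: the central object is the same identity $H(u,t)=e^{-\gamma t}H_0(\phi)$, i.e.\ the representation \eqref{eq:sol}, and positivity is read off from the nonnegativity of the integral term. The only methodological difference is direction: you differentiate $H(u,t)$ along solutions (your substitution $b=t-a$ and the Leibniz computation are correct, giving $\frac{d}{dt}H(u,t)=-\gamma H(u,t)$), whereas the paper starts from the variation-of-parameters formula \eqref{eq:dde,Variat,Para} and rearranges the integrals to land on \eqref{eq:sol}; these are the same computation read in opposite directions, and the paper itself records your differential form later (Proposition~\ref{cor:1} and the ``first integral''/``Lyapunov functional'' remarks). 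One point where you are genuinely more careful than the paper: since $f$ is only defined on $\mathbb{R}^+$, the method of steps does require the inductive hypothesis that the delayed values remain nonnegative, so existence and positivity must be established together, exactly as you do; the paper dismisses existence with one sentence and leaves this implicit.

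The gap you flag in your last paragraph is real, but you should know it cannot be closed, and the paper does not close it either: the theorem asserts strict positivity under $H_0(\phi)\geq 0$, yet the paper's proof concludes only that the solution is positive ``provided $H_0(\phi)>0$''; in the boundary case $H_0(\phi)=0$ the representation yields only $u(t)\geq 0$. Your hoped-for contradiction is unavailable because $f$ is merely nonnegative and may vanish on the relevant window. A sketch of a counterexample: take $f$ continuous with $f\equiv 0$ on $[1,\infty)$ and $f\equiv M$ (large) on $[0,1/2]$, and take $\phi\equiv 1/2$ on $[-\tau,-\delta]$ rising continuously to $\phi(0)=\int_{\tau_0}^{\tau}f(\phi(-a))e^{-\gamma(a-\tau_0)}\,da$, so that $H_0(\phi)=0$ and $\phi>0$. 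For $M$ large and $\delta$ small one checks from \eqref{eq:sol} that $u(s)\geq 1$ on all of $[0,\tau-\tau_0]$, whence $f(u(s))=0$ there and
\begin{equation*}
u(\tau)=\int_{\tau_0}^{\tau}f(u(\tau-a))e^{-\gamma(a-\tau_0)}\,da=0,
\end{equation*}
so the solution touches zero at $t^*=\tau$ despite $H_0(\phi)=0$. The correct dichotomy is therefore exactly what your argument already delivers: $u(t)\geq H_0(\phi)e^{-\gamma t}$ for all $t>0$, hence strict positivity when $H_0(\phi)>0$ and only nonnegativity when $H_0(\phi)=0$. Do not spend further effort on the contradiction; instead either weaken the conclusion to nonnegativity in the boundary case or strengthen the hypothesis \eqref{eq:cond1} to strict inequality---which is all the paper's own proof establishes.
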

\begin{proof}
First, the existence and uniqueness of the solution is straightforward using the method of steps

Next, we will show that the solution $u(t)$ of equation \eqref{eq:dde1} can be given iteratively through
\begin{equation}
 \label{eq:sol}
 u(t)=e^{-\gamma t}H_0(\phi)+\int^{ \tau}_{\tau_0}
  f(u( t-a))e^{-\gamma( a-\tau_0)}da.
 \end{equation}
From \eqref{eq:dde1}, it is easy to obtain the following when $t\geq0$,
\begin{eqnarray}
u(t)= e^{-\gamma t}\left(u(0)+\int_0^t\left[f(u(a-\tau_0))
-f(u(a-\tau))e^{-\gamma(\tau-\tau_0)
}\right]e^{\gamma a}da\right).  \label{eq:dde,Variat,Para}
\end{eqnarray}
Thus, we have
\begin{eqnarray*}
u(t)  &=&e^{-\gamma t}\left(u(0)+\int_{\tau-\tau_0}^{t+\tau-\tau_0}
f(u(a-\tau))e^{\gamma (a-\tau+\tau_0) }d a -  \int_0^t f(u( a -\tau))e^{\gamma( a -\tau+\tau_0)}d a
   \right)  \\
  &=&e^{-\gamma t}\left(u(0)+\int_{\tau-\tau_0}^0f(u(a-\tau))e^{\gamma (a-\tau+\tau_0) }d a +\int_t^{t-\tau+\tau_0}
  f(u( a -\tau))e^{\gamma( a -\tau+\tau_0)} \right) \label{eq:u(t)1}\\
  &=&e^{-\gamma t}\left(u(0)
  -\int_{\tau_0}^\tau f(u(-a))e^{\gamma (-a+\tau_0) }d a+\int^{ \tau}_{\tau_0}
  f(u( t-a))e^{\gamma( t-a+\tau_0)}da \right)\\
  &=&e^{-\gamma t}H_0(\phi)+\int^{ \tau}_{\tau_0}
  f(u( t-a))e^{-\gamma(a-\tau_0)}da .
\end{eqnarray*}
Here, the initial condition $u(t)=\phi(t)$ has been applied in the last equality.

Now, we have derived \eqref{eq:sol}, from which the solution is always positive for all $t>0$ provide $H_0(\phi)>0$. Thus the theorem is proved.
\qed
\end{proof}

\begin{remark}
Theorem \ref{th:1} can be extended to the case of time-dependent function $f=f(u,t)$.  In this case, equation \eqref{eq:dde1} becomes
\begin{equation}
 \label{eq:dde10}\left\{
\begin{array}{ll}
\displaystyle\frac{d u}{d t} = -\gamma u + f(u(t-\tau_0), t-\tau_0) - f(u(t-\tau), t-\tau)e^{-\gamma (\tau - \tau_0)},& t > 0,\\
u(t) = \phi(t), & -\tau \leq t \leq 0.
\end{array} \right.
\end{equation}
Accordingly, the operator $H$ is redefined as
$$H(u,t)= u(t)-\displaystyle\int_{\tau_0}^{\tau}f(u(t-a),t-a)e^{-\gamma(a-\tau_0)}da,$$
and $H_0(u)= H(u,0)$.

Similar to the proof of Theorem \ref{th:1}, we can show that the solution of \eqref{eq:dde10}
is given by
\begin{equation}
\label{eq:sol1}
  u(t)=e^{-\gamma t}H_0(u)+\int^{ \tau}_{\tau_0}
  f(u( t-a), t-a)e^{-\gamma( a-\tau_0)}da.
\end{equation}Thus, if $H_0(\phi)\geq0 $, the solution of \eqref{eq:dde10} is positive for all $t>0$.
\end{remark}

\begin{remark}
The condition \eqref{eq:cond1} is not necessary for a given function $f$.
However, we argue that this condition is `optimal' for general $f$, i.e.
for any $\varepsilon >0$,
there exist a pair of functions, $f$ and $\phi$, such that
$$H_0(\phi)<-\varepsilon,$$
and the solution of \eqref{eq:dde1} with initial condition $\phi(t)$ becomes $0$ at some $t^*>0$. A construction of $f$ and $\phi$ is
given in Proposition \ref{prop:counterexample1} below.
\end{remark}

\begin{proposition}
\label{prop:counterexample1}
For any $\varepsilon>0$, let $\phi(t)\equiv\phi_0>0$ $(t\in[-\tau,0])$, and
\begin{equation}\label{eq:conterex fdelta}
 f_\delta(u)=A e^{-(u-\phi_0)^2/\delta},
\end{equation}
where $\delta>0$ and
\begin{equation}\label{eq:conterex A}
 A=\displaystyle\frac{\gamma(\phi_0+\varepsilon)}{1-e^{-\gamma(\tau-\tau_0)}}.
\end{equation}
Let $u_\delta(t)$ be the solution of
\begin{equation}
\label{eq:dde2}
\left\{
\begin{array}{ll}
\displaystyle\frac{d u}{d t} = -\gamma u + f_\delta(u(t-{\tau_0})) - f_\delta(u(t-\tau))e^{-\gamma (\tau - \tau_0)},\ \ & t > 0,\\
u(t) = \phi(t), & -\tau \leq t \leq 0.
\end{array} \right.
\end{equation}
Then when $\delta$ is small enough, the solution satisfies $u_\delta(\tau)<0$.
\end{proposition}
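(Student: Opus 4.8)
The plan is to evaluate the integral representation \eqref{eq:sol} of the solution at $t=\tau$ and to show that, as $\delta\to0$, the only surviving contribution is the negative term $e^{-\gamma\tau}H_0(\phi)$. First I would compute $H_0(\phi)$ for the constant datum $\phi\equiv\phi_0$: since $\phi(-a)=\phi_0$ and $f_\delta(\phi_0)=A$ for every $a\in[\tau_0,\tau]$, formula \eqref{eq:def H0} gives
\begin{equation*}
H_0(\phi)=\phi_0-A\int_{\tau_0}^{\tau}e^{-\gamma(a-\tau_0)}\,da=\phi_0-\frac{A\bigl(1-e^{-\gamma(\tau-\tau_0)}\bigr)}{\gamma}=-\varepsilon,
\end{equation*}
by the choice \eqref{eq:conterex A} of $A$. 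Substituting $s=\tau-a$ in \eqref{eq:sol} at $t=\tau$ yields
\begin{equation*}
u_\delta(\tau)=-\varepsilon e^{-\gamma\tau}+\int_0^{\tau-\tau_0}f_\delta\bigl(u_\delta(s)\bigr)e^{-\gamma(\tau-\tau_0-s)}\,ds,
\end{equation*}
so the proposition reduces to proving that the remainder $R_\delta:=\int_0^{\tau-\tau_0}f_\delta(u_\delta(s))e^{-\gamma(\tau-\tau_0-s)}\,ds\to0$ as $\delta\to0$; then $R_\delta<\varepsilon e^{-\gamma\tau}$ for small $\delta$ and hence $u_\delta(\tau)<0$.

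The useful structural remark is that for every $t\in[0,\tau]$ the second delayed argument satisfies $t-\tau\in[-\tau,0]$, so $u_\delta(t-\tau)=\phi_0$ and the last term in \eqref{eq:dde2} is frozen at the constant $Ae^{-\gamma(\tau-\tau_0)}$; on $[0,\tau]$ the equation therefore behaves like a single-delay equation with delay $\tau_0$. On the first interval $[0,\tau_0]$ both delayed arguments lie in $[-\tau,0]$, so $f_\delta\equiv A$ there and the solution is the $\delta$-independent function $u_\delta(s)=\phi_0+\varepsilon(1-e^{-\gamma s})$. As $1-e^{-\gamma s}$ is concave and vanishes only at $s=0$, it lies above its chord, giving $u_\delta(s)-\phi_0\ge m s$ with $m=\varepsilon(1-e^{-\gamma\tau_0})/\tau_0>0$, whence
\begin{equation*}
\int_0^{\tau_0}f_\delta(u_\delta(s))\,ds\le A\int_0^{\infty}e^{-m^2 s^2/\delta}\,ds=\frac{A\sqrt{\pi\delta}}{2m},
\end{equation*}
which tends to $0$. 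In particular, when $\tau\le2\tau_0$ the integration range $[0,\tau-\tau_0]$ is contained in $[0,\tau_0]$ and this already proves $R_\delta\to0$.

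For the general case I would control $u_\delta$ on the successive delay intervals $[k\tau_0,(k+1)\tau_0]\cap[0,\tau-\tau_0]$ by induction. Since $|f_\delta|\le A$, the family $\{u_\delta\}$ is uniformly bounded and equi-Lipschitz on $[0,\tau]$, hence precompact in $C([0,\tau])$ by Arzel\`a--Ascoli. On each step interval the equation is linear in $u_\delta$ with forcing $f_\delta(u_\delta(\cdot-\tau_0))-Ae^{-\gamma(\tau-\tau_0)}$, the delayed argument ranging over the previous interval. Assuming inductively that $u_\delta\to u_0$ uniformly there, with $u_0$ meeting $\phi_0$ only transversally at finitely many points, the spike $f_\delta(u_\delta(\cdot-\tau_0))$ tends to $0$ in $L^1$: where $|u_0-\phi_0|\ge\rho$ one has $f_\delta\le Ae^{-\rho^2/(4\delta)}\to0$, while near a crossing the set $\{|u_0-\phi_0|<\rho\}$ has measure $O(\rho)$ by transversality and there one only uses $f_\delta\le A$; letting $\delta\to0$ and then $\rho\to0$ gives the claim. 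Passing to the limit in the variation-of-constants formula identifies $u_0$ on $[\tau_0,\tau]$ as the solution of $u_0'=-\gamma u_0-Ae^{-\gamma(\tau-\tau_0)}$, which is strictly decreasing and hence crosses $\phi_0$ at most once and transversally, its derivative at a crossing being $-\gamma\phi_0-Ae^{-\gamma(\tau-\tau_0)}<0$. This closes the induction and yields $R_\delta\to0$ by the same $L^1$ estimate on all of $[0,\tau-\tau_0]$.

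The main obstacle is exactly this last point: showing that the narrow Gaussian spikes of $f_\delta$, produced each time $u_\delta$ transits through the value $\phi_0$, carry asymptotically no mass, so that the limiting trajectory is well defined and passes through $\phi_0$ transversally rather than lingering near it. Everything else is a bounded linear ODE estimate; the transversality of the crossings, guaranteed by the strictly negative drift $-\gamma u-Ae^{-\gamma(\tau-\tau_0)}$ once the $\tau_0$-delayed feedback has effectively switched off, is what upgrades the pointwise decay $f_\delta\to0$ to the $L^1$ decay needed to pass to the limit and conclude $u_\delta(\tau)\to-\varepsilon e^{-\gamma\tau}<0$.
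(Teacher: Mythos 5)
Your proof is correct, and it reaches the conclusion by a genuinely different route than the paper. The paper first passes to the discontinuous pointwise limit $f_0$ of $f_\delta$ (with $f_0(\phi_0)=A$ and $f_0(u)=0$ otherwise), defines $u_0$ as the solution of \eqref{eq:dde2} driven by $f_0$, proves $\lim_{\delta\to0}u_\delta(t)=u_0(t)$ by induction over the intervals $[n\tau_0,(n+1)\tau_0]$ combined with dominated convergence, and only then computes $u_0(\tau)=-\varepsilon e^{-\gamma\tau}<0$ by showing $u_0$ meets the level $\phi_0$ at most once per interval, so that $f_0(u_0(\cdot))$ vanishes except at finitely many points. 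You never introduce $f_0$: starting from the same computation $H_0(\phi)=-\varepsilon$ as in \eqref{eq:H0 counterex}, you reduce everything to $R_\delta\to0$ in the identity $u_\delta(\tau)=-\varepsilon e^{-\gamma\tau}+R_\delta$ drawn from \eqref{eq:sol}, and you kill the Gaussian spikes directly: with an explicit $O(\sqrt{\delta})$ rate on $[0,\tau_0]$, where $u_\delta$ is $\delta$-independent and $u_\delta(s)-\phi_0\ge ms$ (this alone settles $\tau\le 2\tau_0$), and on later intervals by the $\rho$--$\delta$ splitting of $\{|u_0-\phi_0|<\rho\}$ against its complement, the $O(\rho)$ measure bound coming from transversality of the crossings of the strictly decreasing limit satisfying $u_0'=-\gamma u_0-Ae^{-\gamma(\tau-\tau_0)}$. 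The mechanism is the same in both arguments --- after $[0,\tau_0]$ the $\tau$-delayed term is frozen at $Ae^{-\gamma(\tau-\tau_0)}$ and the drift at level $\phi_0$ is $-\gamma\phi_0-Ae^{-\gamma(\tau-\tau_0)}<0$, so crossings are isolated --- but your version buys two things: a quantitative rate, and, more substantively, the finite-transversal-crossing property is built into the convergence induction itself. The paper's convergence step needs this too but uses it tacitly: in its dominated-convergence argument the pointwise limit $f_\delta(u_\delta(t-a))\to f_0(u_0(t-a))$ can fail at those $a$ where $u_0(t-a)=\phi_0$ (for instance if $|u_\delta-\phi_0|\gg\sqrt{\delta}$ there), and the negligibility of this exceptional set is precisely the finite-crossing claim the paper establishes only afterwards; your interleaved induction avoids that ordering issue, while the paper's route yields in exchange a clean closed-form limiting object $u_0$. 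One cosmetic remark: your appeal to Arzel\`a--Ascoli is superfluous, since $u_\delta$ is $\delta$-independent on $[0,\tau_0]$ and the forcing converges in $L^1$ at each step, so variation of constants already gives uniform convergence of the full family, as your own induction in effect shows.
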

\begin{proof}
When $\delta\rightarrow0$, the function $f_\delta(u)$ converges to the following function
\begin{equation}
  f_0(u)=\left\{
\begin{array}{ll}
  A, &u=\phi_0,\\
  0, & u\neq\phi_0.
\end{array}\right.\label{eq:fphi0'}
\end{equation}
Let $u_0(t)$ be the solution of \eqref{eq:dde2} with $f_\delta$ replaced by $f_0$.

We will divide the proof into two steps. First we will prove that
\begin{equation}
\label{eq:lim}
\lim\limits_{\delta\rightarrow0}u_\delta(t)=u_0(t)
\end{equation}
for all $t>0$. Next, we will show that $u_0(\tau)<0$. Therefore, we have $u_\delta(\tau)<0$ when   $\delta$ is sufficiently small.

To prove \eqref{eq:lim}, we first note that the proof of Theorem \ref{th:1} is also valid when $f(u)$ is replaced by $f_0(u)$. Thus, $u_0(t)$ also satisfies the iterative equation  \eqref{eq:sol}. Therefore, for any $t>0$, we have
\begin{eqnarray*}
&&\lim_{\delta\to0}|u_\delta(t)-u_0(t)|\\
&=& \lim_{\delta\to 0} \left[e^{-\gamma t}\left( \int_{\tau_0}^{\tau}[f_\delta(\phi(-a))
-f_0(\phi(-a))]e^{-\gamma(a-\tau_0)}da
\right)\right.\\
&&{} \hspace{0.2cm}\left. +\int_{\tau_0}^\tau[f_\delta(u_\delta(t-a))-f_0(u_0(t-a))]e^{-\gamma(a-\tau_0)}da\right].\\
&\leq&\lim_{\delta\to0} \int_{\tau_0}^{\tau}|f_\delta(\phi_0)
-f_0(\phi_0)|da + \lim_{\delta\to 0}\int_{\tau_0}^\tau|f_\delta(u_\delta(t-a))-f_0(u_0(t-a))|da\\
&=&\int_{\tau_0}^\tau\lim_{\delta\to0}|f_\delta(u_\delta(t-a))-f_0(u_0(t-a))|da.
\end{eqnarray*}
In the last step, we note that $|f_\delta(u_\delta(t-a))-f_0(u_0(t-a))|\leq2A$, and therefore the equality holds according to the Lebesgue dominated convergence theorem. Now,  we will prove \eqref{eq:lim} inductively.

When $0\leq t\leq \tau_0$ and $\tau_0\leq a\leq\tau$, we have $u_\delta(t-a)=u_0(t-a)=\phi_0$. Therefore
$$\lim_{\delta\to0}|u_\delta(t)-u_0(t)|= \int_{\tau_0}^\tau\lim_{\delta\to0}|f_\delta(u_\delta(t-a))-f_0(u_0(t-a))|da = 0.$$

Now, we assume that \eqref{eq:lim} is valid for $0\leq t\leq n\tau_0$.
When $n\tau_0 < t\leq (n+1)\tau_0$ and $\tau_0\leq a\leq \tau$, we have $(t-a) \leq n\tau_0$, and therefore
$$\lim_{\delta\to 0} u_\delta (t-a) = u_0(t-a),\quad n\tau_0 < t \leq (n+1)\tau_0, \ \tau_0 \leq a \leq \tau.$$
Thus, we have
$$\lim_{\delta\to0}|u_\delta(t)-u_0(t)|= \int_{\tau_0}^\tau\lim_{\delta\to0}|f_\delta(u_\delta(t-a))-f_0(u_0(t-a))|da = 0,$$
and \eqref{eq:lim} holds  for $0\leq t\leq (n+1)\tau_0$.
Therefore, we have \eqref{eq:lim} for all $t>0$.

To prove that $u_0(\tau) < 0$, we will show that
$$\int_{\tau_0}^\tau f(u_0(t-a)) e^{-\gamma (a-\tau_0)} da = 0.$$
In fact, we will prove a stronger result. Let
$$I_n=[n\tau_0,(n+1)\tau_0],\ n=0,1,2,\cdots \left[\tau/\tau_0\right]+1,$$
where $ [\tau/\tau_0]$ denotes the integer part of $\tau/\tau_0$.
We claim that in each interval $I_n$, there is at most one point $t_n\in I_n$ such that $u_0(t_n)=\phi_0$.

First, we note that \eqref{eq:conterex A} is equivalent to
\begin{eqnarray}
\label{eq:H0 counterex}
H_0(u)=\phi_0 -\int_{\tau_0}^{\tau}f(\phi_0)e^{-\gamma(a-\tau_0)}da=-\varepsilon.
\end{eqnarray}
From \eqref{eq:sol}, when $0\leq t\leq\tau_0$,
\begin{eqnarray*}
  u_0(t)&=&e^{-\gamma t}H_0(u)+\int_{\tau_0}^{\tau}f(u_0(t-a))e^{-\gamma(a-\tau_0)}da\\
   &=&-\varepsilon e^{-\gamma t}+\int^{ \tau}_{\tau_0}  Ae^{-\gamma( a-\tau_0)}da \\
   &=& -\varepsilon e^{-\gamma t}+\phi_0+\varepsilon \\
   &=& \phi_0+\varepsilon(1-e^{-\gamma t})>\phi_0.
\end{eqnarray*}
Thus, $u(t)\neq\phi_0$ for any $t\in I_0$, and hence,
$$f(u_0(t))=0, \forall ~ t\in(0,\tau_0].$$

Now let $n\leq [\tau/\tau_0]+1$, and assume that for any  integer $k<n$, there is at most one point $t_k\in I_k$ such that $u_0(t_k)=\phi_0$. For $t\in I_n$  and $t\leq\tau$, we have
\begin{eqnarray*}
  u_0(t)&=&-\varepsilon e^{-\gamma t} +\int^{ \tau}_{\tau_0} f(u_0(t-a))e^{-\gamma( a-\tau_0)}da\\
  &=&-\varepsilon e^{-\gamma t} +\int^{ t}_{\tau_0} f(u_0(t-a))e^{-\gamma( a-\tau_0)}da+\int^{ \tau}_{t} f(u_0(t-a))e^{-\gamma( a-\tau_0)}da.
\end{eqnarray*}
We note that when $t\in I_n$ and $\tau_0\leq a\leq \tau$, $0\leq t-a \leq n \tau_0$. From our assumption, there are at most $n$ points in $t\in [0,n\tau_0]$ such that $u_0(t) = \phi_0$. Therefore, the first integral
$$\int^{ t}_{\tau_0} f(u_0(t-a))e^{-\gamma( a-\tau_0)}da=0.$$
Thus,
\begin{eqnarray*}
u_0(t)&=&-\varepsilon e^{-\gamma t}+\int^{ \tau}_{t} f(u_0(t-a))e^{-\gamma( a-\tau_0)}da.\\
  &=&-\varepsilon e^{-\gamma t} +\int^{ \tau}_{t}Ae^{-\gamma( a-\tau_0)}da\\
  &=&-\varepsilon e^{-\gamma t}+\frac{A}{\gamma}\left(e^{-\gamma(t-\tau_0)}-e^{-\gamma(\tau-\tau_0)}\right).
\end{eqnarray*}
Hence,
$$
\frac{du_0}{dt} = e^{-\gamma t} (\gamma \varepsilon - A e^{\gamma \tau_0})<-e^{-\gamma t}\gamma\phi_0<0,
$$
and $u_0(t)$ is strictly monotonically decreasing for $t\in I_n$.
Therefore, there is at most one point $t_n\in I_n$, $t\leq \tau$ such that $u(t_n)=\phi_0$, and the claim is proved.

Now, since there are at most a finite number of values of $t\in [0,\tau]$ such that $u_0(t)=\phi_0$, the integral
$$\int^{ \tau}_{\tau_0}  f(u_0(\tau-a))e^{-\gamma( a-\tau_0)}da=0.$$
Therefore,
$$u_0(\tau) = -\varepsilon e^{-\gamma\tau}+ \int^{ \tau}_{\tau_0}  f(u_0(t-a))e^{-\gamma( a-\tau_0)}da =-\varepsilon e^{-\gamma\tau}<0,$$
and the proposition is proved.\qed
\end{proof}

Figure \ref{fig:CounterEx} shows the solution of \eqref{eq:dde2} when $\delta$ is sufficiently small.
\begin{figure}[htbp]
\centering
\includegraphics[width=7cm]{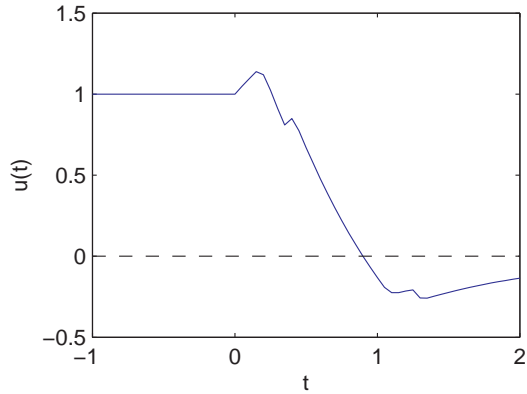}\\
\caption{An example solution of the equation \eqref{eq:dde2}. Parameters used are $\gamma=\phi_0 = 1$, $\tau_0 = 0.1$, $\tau = 1$, $\varepsilon=1,\delta  = 0.005$.}
\label{fig:CounterEx}
\end{figure}

\section{$\omega$-limit set of positive solutions}
\label{sec:omega}
In this section, we always assume that $f$ is bounded,
\begin{equation}\label{eq:f bdd}
0 \leq f(u ) \leq \bar{f},\quad  \forall u\in \mathbb{R}^+,
\end{equation}
and discuss the $\omega$-limit set of positive solutions.

It is easy to see that any solution $u(t)$ of \eqref{eq:dde1} is associated with a sequence of functions $\{u_n(t)\}_{n\in\mathbb{N}}$ in $C([-\tau,0],\mathbb{R}^+)$
such that $u_n(t)=u(t+n\tau)$.  From \eqref{eq:sol}, we define a map
$T: C([-\tau,0],\mathbb{R}^+) \to C([-\tau,0],\mathbb{R}^+)$ such that $u_n=Tu_{n-1}$. Explicitly, $T$ is given  by
 \renewcommand\arraystretch{2}
\begin{equation}\label{eq:def T}
  (Tu)(t)=\left\{\begin{array}{ll}
  \displaystyle e^{-\gamma(t+\tau)}H_0(u)   +  \int_{\tau_0}^{\tau}f(u(t+\tau-a))e^{-\gamma(a-\tau_0)}da,&t\in[-\tau,\tau_0-\tau],\\
 {}   \\
\displaystyle    e^{-\gamma(t+\tau)}H_0(u)  +  \int_{\tau_0}^{t+\tau}f((Tu)(t-a))e^{-\gamma(a-\tau_0)}da&\\
{}\displaystyle \quad +\int_{t+\tau}^{\tau}f(u(t+\tau-a))e^{-\gamma(a-\tau_0)}da,& t\in(\tau_0-\tau,0]
  \end{array}\right.
\end{equation}
where
$$H_0(u)=u(0)-\int_{\tau_0}^{\tau}f(u(-a))e^{-\gamma(a-\tau_0)}da$$
as defined in \eqref{eq:def H0}. Therefore, $T$ defines a dynamical system in the function space $C([-\tau, 0], \mathbb{R}^+)$.

For any $u_0\in C([-\tau, 0], \mathbb{R}^+)$, the $\omega$-limit set of $u_0$ under the map $T$ is defined as
$$\omega_T(u_0)=\left\{v\in
C([-\tau,0],\mathbb{R}^+)\left| \mbox{There is a sequence}\  n_k \rightarrow+\infty\ \mbox{such that }\lim_{k\rightarrow\infty}T^{n_k}u_0=v \right. \right\}.$$
The structure of the $\omega$-limit set $\omega_T(u_0)$ provides characteristic descriptions of the long time behavior of the solution of \eqref{eq:dde1} with a given initial function $u(t)=u_0(t)$ when $-\tau\leq t \leq 0$.

According to the previous discussions, we define two subsets of $C([-\tau,0],\mathbb{R}^+)$ as follows.
\begin{eqnarray}
  \mathcal{D}&=&\{u\in C([-\tau,0],\mathbb{R}^+)~|~H_0(u)\geq0\},\label{eq:def D}\\ 
  \mathcal{D}_0 &=& \{u\in C([-\tau,0],\mathbb{R}^+)~|~H_0(u)=0\}.\label{eq:def D0}
\end{eqnarray}
Further, we equip
$\mathcal{D}$ and $\mathcal{D}_0$ with the norm:
$$\|u\|=\max\limits_{t\in[-\tau,\, 0]}|\,u(t)|.$$
It is obvious that  $\mathcal{D}_0\subseteq \mathcal{D}$.

\begin{theorem}
\label{th:T main}
Let $T$ be the operator defined in
\eqref{eq:def T}, we have
\begin{enumerate}
  \item \label{list:theoremT1}$T(\mathcal{D})\subseteq \mathcal{D}$.
  \item \label{list:theoremT2}$T$ is completely continuous in the $C([-\tau,0],\mathbb{R}^+)$ topology.
  \item \label{list:theoremT3}For any $u\in \mathcal{D}$, $\{T^nu\}_{n\in\mathbb{N}}$ is uniformly bounded.
  \item \label{list:theoremT4}For any $u\in\mathcal{D}$, the $\omega$-limiting set $\omega_T(u)$ is nonempty.
  \item \label{list:theoremT5} For any $u\in\mathcal{D}$,
  $\omega_T(u)\subseteq  \mathcal{D}_0$.
\end{enumerate}
\end{theorem}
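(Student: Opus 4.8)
The plan is to reduce all five assertions to a single scaling identity for $H_0$ under $T$, namely
\[
H_0(Tu)=e^{-\gamma\tau}\,H_0(u)\qquad\text{for all }u\in C([-\tau,0],\mathbb{R}^+).
\]
To establish it I would evaluate the definition \eqref{eq:def T} at $t=0$, which falls in the second branch; there the term $\int_{t+\tau}^{\tau}$ becomes $\int_{\tau}^{\tau}=0$, so $(Tu)(0)=e^{-\gamma\tau}H_0(u)+\int_{\tau_0}^{\tau}f((Tu)(-a))e^{-\gamma(a-\tau_0)}\,da$. Since $H_0(Tu)$ is obtained from $(Tu)(0)$ by subtracting exactly this integral, the identity follows at once. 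Assertion \ref{list:theoremT1} is then immediate: $e^{-\gamma\tau}>0$, so $H_0(u)\ge0$ gives $H_0(Tu)\ge0$, i.e. $Tu\in\mathcal{D}$.

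For assertion \ref{list:theoremT3} I would bound \eqref{eq:def T} termwise. Writing $v=T^{n-1}u$ and using $0\le f\le\bar{f}$, in either branch the $f$-integrals together contribute at most $\bar{f}\int_{\tau_0}^{\tau}e^{-\gamma(a-\tau_0)}\,da=\frac{\bar{f}}{\gamma}(1-e^{-\gamma(\tau-\tau_0)})$, while $e^{-\gamma(t+\tau)}\le1$ on $[-\tau,0]$. Iterating the scaling identity gives $H_0(v)=e^{-\gamma(n-1)\tau}H_0(u)\le H_0(u)$, so
\[
\|T^nu\|\le H_0(u)+\frac{\bar{f}}{\gamma}\bigl(1-e^{-\gamma(\tau-\tau_0)}\bigr)
\]
uniformly in $n$, which is the desired uniform bound.

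Assertion \ref{list:theoremT2} is the technical core. Continuity of $T$ follows from continuity and boundedness of $f$: if $u_k\to u$ uniformly then $f$ is uniformly continuous on the common bounded range, so $H_0(u_k)\to H_0(u)$ and every integrand in \eqref{eq:def T} converges uniformly, whence $Tu_k\to Tu$. Relative compactness of $T(B)$ for bounded $B$ I would obtain from Arzel\`a--Ascoli: the uniform bound is the estimate above, and for equicontinuity I would differentiate the explicit formula in $t$, noting that the exponential prefactor contributes a factor $\gamma H_0(u)$ and the variable integral limits contribute boundary terms of size at most $2\bar{f}$, so the $t$-derivative of $Tu$ is bounded uniformly over $B$. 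Equivalently, on $\mathcal{D}$ one may read $Tu$ as the segment on $[-\tau,0]$ of the time-$\tau$ shift of the solution of \eqref{eq:dde1}, whose slope equals the right-hand side of \eqref{eq:dde1} and is thus bounded by $\gamma M+\bar{f}(1+e^{-\gamma(\tau-\tau_0)})$ on the ball of radius $M$; either way $\{Tu:u\in B\}$ is uniformly Lipschitz, hence equicontinuous.

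Assertions \ref{list:theoremT4} and \ref{list:theoremT5} then fall out. For \ref{list:theoremT4}, assertion \ref{list:theoremT3} confines the orbit to a fixed ball $B$, and $\{T^nu\}_{n\ge1}\subseteq T(B)$ is relatively compact by \ref{list:theoremT2}; any convergent subsequence yields a point of $\omega_T(u)$, so it is nonempty. For \ref{list:theoremT5}, take $v\in\omega_T(u)$ with $T^{n_k}u\to v$; iterating the scaling identity gives $H_0(T^{n_k}u)=e^{-\gamma n_k\tau}H_0(u)\to0$, and since $H_0$ is continuous (the same uniform-continuity argument as for $T$), we get $H_0(v)=0$, that is $v\in\mathcal{D}_0$. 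I expect the equicontinuity step in \ref{list:theoremT2} to be the main obstacle, mainly because of the self-referential term $(Tu)(t-a)$ in the second branch of \eqref{eq:def T}; this is harmless since $a\ge\tau_0>0$ makes it an evaluation at strictly earlier times, and the solution-segment viewpoint makes the uniform Lipschitz bound transparent.
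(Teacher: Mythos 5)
Your proposal is correct and follows essentially the same route as the paper: your scaling identity $H_0(Tu)=e^{-\gamma\tau}H_0(u)$ is exactly the paper's Lemma~\ref{th:H lemma}, your Arzel\`a--Ascoli argument via a uniform bound on $\frac{d}{dt}(Tu)$ matches the paper's Lemma~\ref{th:Tu differentiable} and its proof of part~\ref{list:theoremT2}, and your treatment of parts \ref{list:theoremT1}, \ref{list:theoremT3}--\ref{list:theoremT5} (including continuity of $H_0$, the paper's Lemma~\ref{th:H cont}) coincides with the paper's proof up to slightly sharper constants. If anything, you are marginally more complete than the paper, since you also verify the continuity half of ``completely continuous'' (handling the self-referential term $f((Tu)(t-a))$ by stepping forward from $a\geq\tau_0>0$), which the paper passes over in silence.
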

To prove Theorem \ref{th:T main}, we need several lemmas.

\begin{lemma}\label{th:H cont}
For any fixed $t\in\mathbb{R}^+$, the operator $H(u,t)$ is continuous with respect to $u$ on
the Banach space
$(C([-\tau,+\infty),\mathbb{R}^+),\|\cdot\|)$ with the norm
$$\|u\|=\sup\limits_{t\in[-\tau,+\infty)}|u(t)|.$$
In particularly, $H_0(u)$ is continuous.
\end{lemma}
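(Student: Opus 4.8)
The plan is to prove sequential continuity: if $u_n \to u$ in the sup norm of $C([-\tau,+\infty),\mathbb{R}^+)$, then $H(u_n,t) \to H(u,t)$ for each fixed $t \geq 0$. Writing out the definition \eqref{eq:def H}, the difference splits as
$$H(u_n,t) - H(u,t) = \bigl(u_n(t)-u(t)\bigr) - \int_{\tau_0}^{\tau}\bigl[f(u_n(t-a)) - f(u(t-a))\bigr] e^{-\gamma(a-\tau_0)}\,da.$$
The pointwise term is harmless, since $|u_n(t)-u(t)| \leq \|u_n - u\| \to 0$. So the whole matter reduces to controlling the integral term, and taking $t = 0$ at the end will give the continuity of $H_0$ as a special case.

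The key observation for the integral is that the argument $t-a$ ranges only over the fixed compact interval $[t-\tau,\, t-\tau_0]$, which, since $t \geq 0$ and $a \leq \tau$, lies inside the domain $[-\tau,+\infty)$. First I would note that $M := \|u\| < \infty$ because $u$ belongs to the Banach space, and since $u_n \to u$ uniformly there is an index $N$ with $\|u_n\| \leq M+1$ for all $n \geq N$. Hence for all such $n$ and all $a \in [\tau_0,\tau]$, both $u_n(t-a)$ and $u(t-a)$ lie in the compact set $[0, M+1]$.

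Now $f$ is continuous, hence uniformly continuous on $[0, M+1]$. Given $\varepsilon > 0$, choose the modulus $\delta > 0$ from uniform continuity; then for $n$ large enough that $\|u_n - u\| < \delta$, we get $|u_n(t-a) - u(t-a)| < \delta$ for every $a$, and therefore $|f(u_n(t-a)) - f(u(t-a))| < \varepsilon$ \emph{uniformly} in $a$. The integral term is then bounded by $\varepsilon \int_{\tau_0}^\tau e^{-\gamma(a-\tau_0)}\,da = \tfrac{\varepsilon}{\gamma}\bigl(1 - e^{-\gamma(\tau-\tau_0)}\bigr)$, which tends to $0$ with $\varepsilon$. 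Combining the two terms yields $H(u_n,t) \to H(u,t)$.

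I expect the only genuine subtlety to be the main obstacle: $f$ is merely continuous on $\mathbb{R}^+$ and need not be uniformly continuous there, so one cannot directly convert the uniform smallness of $u_n - u$ into smallness of $f(u_n) - f(u)$. The fix, as above, is that uniform convergence confines all the relevant function values to a compact set, on which $f$ is automatically uniformly continuous. Alternatively---and perhaps more cleanly in this section, where $f$ is assumed bounded by $\bar f$ via \eqref{eq:f bdd}---one may argue from the pointwise convergence of the integrand together with the uniform bound $|f(u_n(t-a)) - f(u(t-a))| \leq 2\bar f$ and invoke the Lebesgue dominated convergence theorem, exactly as is done later in the proof of Proposition \ref{prop:counterexample1}.
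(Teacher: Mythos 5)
Your proposal is correct and follows essentially the same route as the paper's proof: both split $H(u_n,t)-H(u,t)$ into the pointwise term and the integral term, and both handle the integral by confining all relevant values of $u$ and nearby functions to a compact set on which the merely continuous $f$ becomes uniformly continuous (the paper inflates the range of $u_0(t-a)$, $a\in[\tau_0,\tau]$, by $\varepsilon/2$, while you use $[0,M+1]$ with $M=\|u\|$; your sequential formulation is equivalent to the paper's $\varepsilon$--$\delta$ argument in this normed space). Your parenthetical dominated-convergence alternative is also valid under the boundedness assumption \eqref{eq:f bdd}, but the paper reserves that device for Proposition \ref{prop:counterexample1} and proves this lemma without assuming $f$ bounded.
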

\begin{proof}
Let $u_0$ be any function in $C([-\tau,+\infty),\mathbb{R}^+)$. Since $u_0(s)$ is continuous for $s>-\tau$. For any $t>0$, $\min_{\tau_0\leq a\leq \tau} u_0(t-a)$ and $\max_{\tau_0\leq a\leq \tau} u_0(t-a)$ are well defined and finite.

For any $\varepsilon > 0$, since $f(u)$ is a continuous function, it is uniformly continuous when $u$ takes value from the interval
$$I=\left[\min_{\tau_0\leq a\leq\tau}u_0(t-a)-\frac{\varepsilon}{2} \ ,\ \max_{\tau_0\leq a\leq\tau}u_0(t-a)+\frac\varepsilon2\right].$$
Thus,  there exists $\delta:\ 0<\delta<\varepsilon/2$, such that for any $u\in C([-\tau,+\infty),\mathbb{R}^+)$ with $\|u-u_0\|<\delta$, and hence $u(t-a)\in I$ for any $a\in [\tau_0, \tau]$, have
$$|f(u_0(t-a))-f(u(t-a))|<\frac{\varepsilon}{2(\tau-\tau_0)},\ \ \forall a\in[\tau_0,\tau].$$
Thus,
\begin{eqnarray*}
&&\left|H(u,t)-H(u_0,t)\right|\\
&=&\left|u(t)-u_0(t)-\displaystyle\int_{\tau_0}^{\tau}\left[f(u(t-a))-f(u_0(t-a))\right]e^{-\gamma(a-\tau_0)}da\right| \\
&\leq& \|u-u_0\|
+\int_{\tau_0}^{\tau}\left|f(u(t-a))-f(u_0(t-a))\right|da\\
&<&\frac\varepsilon2+\displaystyle\int_{\tau_0}^{\tau}\displaystyle\frac{\varepsilon}{2(\tau-\tau_0)}da
=\varepsilon,
\end{eqnarray*}
and the lemma has been proved.\qed
\end{proof}

\begin{lemma}
\label{th:H lemma}
For any $u(t)\in\mathcal{D}$,
\begin{equation}
\label{eq:init decay}
H_0(T^n u)=e^{-\gamma n\tau}H_0(u),~\forall n\in\mathbb{N}.
\end{equation}
\end{lemma}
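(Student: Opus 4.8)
The plan is to reduce the $n$-fold statement to a single-step identity and then iterate. Specifically, I would first prove the base case
$$H_0(Tu) = e^{-\gamma\tau}H_0(u) \qquad \text{for all } u\in\mathcal{D},$$
and then obtain the general case by a one-line induction: once the $n=1$ identity is available, we have $H_0(T^n u) = H_0(T(T^{n-1}u)) = e^{-\gamma\tau}H_0(T^{n-1}u)$, and feeding in the induction hypothesis $H_0(T^{n-1}u)=e^{-\gamma(n-1)\tau}H_0(u)$ yields $H_0(T^n u)=e^{-\gamma n\tau}H_0(u)$. The case $n=0$ is trivially the identity.

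Everything therefore rests on the base case, which I would establish by a direct computation from the definitions. Applying \eqref{eq:def H0} to $Tu$ gives
$$H_0(Tu) = (Tu)(0) - \int_{\tau_0}^{\tau} f((Tu)(-a))\,e^{-\gamma(a-\tau_0)}\,da,$$
so the only thing to evaluate is $(Tu)(0)$. Since $0\in(\tau_0-\tau,0]$, the point $t=0$ falls into the second branch of the piecewise formula \eqref{eq:def T}. The key observation is that at $t=0$ the upper limit $t+\tau$ of the first integral becomes $\tau$, so that integral is exactly $\int_{\tau_0}^{\tau} f((Tu)(-a))e^{-\gamma(a-\tau_0)}\,da$, while the second integral $\int_{t+\tau}^{\tau}=\int_{\tau}^{\tau}$ collapses to zero. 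Hence $(Tu)(0) = e^{-\gamma\tau}H_0(u) + \int_{\tau_0}^{\tau} f((Tu)(-a))e^{-\gamma(a-\tau_0)}\,da$, and substituting this into the expression for $H_0(Tu)$ cancels the integral term and leaves $H_0(Tu)=e^{-\gamma\tau}H_0(u)$, as desired.

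An equivalent route, which I would keep in reserve as a conceptual check, uses the interpretation of $T$ as the time-$\tau$ shift along the solution. Writing $\tilde u$ for the solution of \eqref{eq:dde1} with initial data $u$, one has $(T^n u)(t)=\tilde u(t+n\tau)$, so that $H_0(T^n u)=H(\tilde u,n\tau)$ by the very definition of $H_0$ and $H$; and the solution formula \eqref{eq:sol} rearranges precisely to $H(\tilde u,t)=e^{-\gamma t}H_0(u)$ for all $t$. Setting $t=n\tau$ then delivers the claim in one stroke.

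I do not anticipate a genuine obstacle here; the only point requiring care is the correct reading of the piecewise definition \eqref{eq:def T} at the boundary $t=0$ — in particular verifying that the $\int_{t+\tau}^{\tau}$ term vanishes there and that the remaining integral matches the one subtracted off in $H_0$. It is worth noting that the identity is purely algebraic: it requires neither the boundedness assumption \eqref{eq:f bdd} nor any use of the hypothesis $u\in\mathcal{D}$ beyond guaranteeing that $H_0(u)$ is well defined.
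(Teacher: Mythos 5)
Your proof is correct and is essentially the paper's own argument: the paper likewise evaluates $(T^n u)(0)$ from the second branch of \eqref{eq:def T} at $t=0$ (where the $\int_{t+\tau}^{\tau}$ term vanishes and the first integral's upper limit becomes $\tau$), substitutes into $H_0(T^n u)$ so that the integral cancels, and concludes \eqref{eq:init decay} by induction. One caveat on your ``reserve'' route: the identification $(T^n u)(t)=\tilde u(t+n\tau)$ is Lemma \ref{th:T sol}, whose proof in the paper itself invokes the present lemma, so that alternative would be circular as a primary argument --- but since you offer it only as a conceptual check, your main proof stands as written.
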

\begin{proof}
From the definition of $T$ in \eqref{eq:def T}, we have for any $u\in\mathcal{D}$,
$$(T^n u)(0)=e^{-\gamma\tau}H_0(T^{n-1}u)-\int_{\tau_0}^{\tau}f((T^n u)(-a))e^{-\gamma(a-\tau_0)}da. $$
Hence,
  \begin{eqnarray*}
H_0(T^n u)&=&(T^n u)(0)-\int_{\tau_0}^{\tau}f(T^n u(-a))e^{-\gamma(a-\tau_0)}da  \\
    &=&e^{-\gamma\tau}H_0(T^{n-1} u).
  \end{eqnarray*}
Thus,  \eqref{eq:init decay} is obtained by induction. \qed
\end{proof}

\begin{lemma}\label{th:T sol}
 For any $u_0(t)\in\mathcal{D}$ and the sequence $\{T^n u_0\}_{n\in\mathbb{N}}$, let the function
$u(t)$ on $[0,+\infty)$ be defined  by 
$$u(t)=(T^n u_0)(t-n\tau),\quad t\in [(n-1)\tau,n\tau),n\in\mathbb{N}.$$
Then $u(t)$ is the solution of \eqref{eq:dde1} with initial function $u(t)=u_0(t)$ when $-\tau\leq t\leq0$.
\end{lemma}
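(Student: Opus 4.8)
The plan is to verify that the piecewise-assembled function $u(t)$ satisfies the integral equation \eqref{eq:sol} for every $t>0$, and then to identify it as the solution of \eqref{eq:dde1}. The computation in the proof of Theorem \ref{th:1} shows that a continuous function on $[-\tau,\infty)$ agreeing with $u_0$ on $[-\tau,0]$ solves \eqref{eq:dde1} if and only if it obeys $u(t)=e^{-\gamma t}H_0(u_0)+\int_{\tau_0}^{\tau}f(u(t-a))e^{-\gamma(a-\tau_0)}\,da$ for all $t>0$; the ``if'' direction is obtained by differentiating this identity, which reproduces the right-hand side of \eqref{eq:dde1}. Since the solution is unique by Theorem \ref{th:1}, it suffices to establish this integral identity for the $u(t)$ defined in the lemma.

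Write $u_n:=T^n u_0$, and for $t\in[(n-1)\tau,n\tau)$ set $s:=t-n\tau\in[-\tau,0)$, so that $u(t)=u_n(s)=(Tu_{n-1})(s)$. First I would settle continuity at the nodes $t=m\tau$: evaluating the first branch of \eqref{eq:def T} at $s=-\tau$ and using the definition \eqref{eq:def H0} of $H_0$ gives $(Tu_m)(-\tau)=u_m(0)$, so the value assigned to $u$ at $t=m\tau$ from block $m+1$ equals the left limit from block $m$; hence $u\in C([-\tau,\infty),\mathbb{R}^+)$ and $u\equiv u_0$ on $[-\tau,0]$. Next, Lemma \ref{th:H lemma} gives $e^{-\gamma(s+\tau)}H_0(u_{n-1})=e^{-\gamma(t-(n-1)\tau)}e^{-\gamma(n-1)\tau}H_0(u_0)=e^{-\gamma t}H_0(u_0)$, which already produces the correct leading term of \eqref{eq:sol}.

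It then remains to recognise the integral contributions of \eqref{eq:def T} as $\int_{\tau_0}^{\tau}f(u(t-a))e^{-\gamma(a-\tau_0)}\,da$. The two elementary shift identities are $u(t-a)=u_{n-1}(s+\tau-a)$ when the retarded argument $t-a$ lies in the block $[(n-2)\tau,(n-1)\tau)$, and $u(t-a)=(Tu_{n-1})(s-a)$ when $t-a$ lies in $[(n-1)\tau,n\tau)$; both follow from the definition of $u$ via $t-a=(s+\tau-a)+(n-1)\tau=(s-a)+n\tau$. For $s\in[-\tau,\tau_0-\tau]$ every $a\in[\tau_0,\tau]$ keeps $t-a$ in block $n-1$, so the lone integral in the first branch of \eqref{eq:def T} is precisely the desired one. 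For $s\in(\tau_0-\tau,0]$ the split point $a=s+\tau$ is exactly where $t-a$ crosses the node $(n-1)\tau$: on $[\tau_0,s+\tau]$ the second identity applies and on $[s+\tau,\tau]$ the first, so the two integrals in the second branch of \eqref{eq:def T} reassemble into the same single integral over $[\tau_0,\tau]$. In both cases $u(t)$ satisfies \eqref{eq:sol}, and the lemma follows.

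The step I expect to be the main obstacle is the index and domain bookkeeping of the preceding paragraph: one must track, as $a$ sweeps $[\tau_0,\tau]$, into which block the retarded argument $t-a$ falls, and confirm that the branch split of $T$ at $a=s+\tau$ coincides exactly with the crossing of the node $(n-1)\tau$. Once this alignment is checked, the two-branch definition of $T$ is revealed to be merely the block decomposition of the integral equation \eqref{eq:sol}, and no further estimates are needed.
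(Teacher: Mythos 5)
Your proposal is correct and follows essentially the same route as the paper's proof: continuity at the nodes via $(Tv)(-\tau)=v(0)$, then blockwise verification of the integral identity \eqref{eq:sol} using Lemma \ref{th:H lemma} for the $e^{-\gamma t}H_0(u_0)$ term, and finally differentiation of \eqref{eq:sol} to recover \eqref{eq:dde1}. If anything, you are slightly more thorough: the paper verifies only the branch $(n-1)\tau\leq t\leq(n-1)\tau+\tau_0$ and calls the other ``similar,'' whereas you explicitly check that the split point $a=s+\tau$ in \eqref{eq:def T} coincides with the retarded argument crossing the node $(n-1)\tau$, which is exactly the bookkeeping the paper leaves implicit.
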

\begin{proof}
First, we show that $u(t)$ is continuous. To this end,
since $u(t)$ is continuous on each interval $((n-1)\tau,n\tau)$, we only need to show that $u(t)$ is continuous  at all points $t=n\tau$ .
For any $v\in \mathcal{D}$, from \eqref{eq:def T}, we have
$$
 (Tv)(-\tau) =v(0)-\int_{\tau_0}^{\tau}f(v(-a))e^{-\gamma(a-\tau_0)}da+ \int_{\tau_0}^{\tau}f(v(-a))e^{-\gamma(a-\tau_0)}da = v(0).
$$
Therefore,
$$\lim_{t\to n\tau^+}u(t)=(T^{n+1} u_0)(-\tau)=(T^nu_0)(0)=\lim_{t\to n\tau^-}u(t)=(T^{n+1}u_0)(-\tau),$$
and  $u(t)$ is continuous at $t=n\tau$.

Next, we will prove that $u(t)$ satisfies equation \eqref{eq:sol}:
\begin{equation}
\label{eq:Tu eqn}
u(t)=e^{-\gamma t}H_0(u_0)+\int^{ \tau}_{\tau_0}
  f(u( t-a))e^{-\gamma( a-\tau_0)}da.
\end{equation}
To this end, we only show that \eqref{eq:Tu eqn} holds for $(n-1)\tau \leq t \leq(n-1)\tau+\tau_0\ (\forall n\in\mathbb{N})$,
the case when $(n-1)\tau+\tau_0 \leq t \leq n\tau$ is similar.

When $(n-1)\tau \leq t \leq (n-1)\tau+\tau_0$, we note that for $a\in [\tau_0, \tau]$, $(n-2)\tau \leq t-a\leq (n-1)\tau$, and therefore
$$(T^{n-1}u_0)(t-a (n-1)\tau) = u(t-a).$$
Thus, applying Lemma \ref{th:H lemma}, we have
\begin{eqnarray*}
  u(t)& =& (T(T^{n-1}u_0))(t-n\tau)\\
&=&e^{-\gamma(t-n\tau+\tau)}H_0(T^{n-1}u_0) +  \int_{\tau_0}^{\tau}f((T^{n-1}u_0)(t-n\tau+\tau-a))e^{-\gamma(a-\tau_0)}da \\
  &=& e^{-\gamma(t-(n-1)\tau)}e^{-\gamma(n-1)\tau}H_0(u_0) +  \int_{\tau_0}^{\tau}f((T^{n-1}u_0)(t-a-(n-1)\tau))e^{-\gamma(a-\tau_0)}da  \\
  &=& e^{-\gamma t}H_0(u_0)+ \int_{\tau_0}^{\tau}f(u(t-a))e^{-\gamma(a-\tau_0)}da,
\end{eqnarray*}
and \eqref{eq:Tu eqn} holds.

Now, we will prove that $u(t)$ is the solution of \eqref{eq:dde1} with the initial condition $u_0(t)$. It is obvious that $u(t)=u_0(t)$ when $-\tau\leq t\leq0$. When $t>0$, from \eqref{eq:Tu eqn}, we have
\begin{eqnarray*}
\frac{du}{dt}&=&-\gamma e^{-\gamma t}H_0(u_0)+\frac{d}{dt}\int_{\tau_0}^{\tau}f(u(t-a))e^{-\gamma(a-\tau_0)}da\\
 &=&-\gamma e^{-\gamma t}H_0(u_0)+\frac{d}{dt}\int_{t-\tau}^{t-\tau_0}f(u(a))e^{-\gamma(t-a-\tau_0)}da\\
 &=&-\gamma e^{-\gamma t}H_0(u_0)+f(u(t-\tau_0))-f(u(t-\tau))e^{-\gamma(\tau-\tau_0)}-\gamma\int_{t-\tau}^{t-\tau_0}f(u(a))e^{-\gamma(t-a-\tau_0)}da\\
 &=&-\gamma\left(e^{-\gamma t}H_0(u_0)+\int_{\tau_0}^{\tau}f(u(t-a))e^{-\gamma(a-\tau_0)}da\right)+f(u(t-\tau_0))-f(u(t-\tau))e^{-\gamma(\tau-\tau_0)}\\
 &=&-\gamma u(t)+f(u(t-\tau_0))-f(u(t-\tau))e^{-\gamma(\tau-\tau_0)}.
\end{eqnarray*}
Thus $u(t)$ is a solution of \eqref{eq:dde1}.   \qed
\end{proof}

\begin{lemma}
\label{th:Tu differentiable}
For any $u\in \mathcal{D}$, $(Tu)(t)$ is continuously differentiable.
\end{lemma}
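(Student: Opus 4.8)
The plan is to read off the derivative directly from the piecewise formula \eqref{eq:def T}, the governing principle being that a suitable change of variables turns every integral appearing there into an integral with \emph{variable endpoints} of a \emph{continuous} integrand. Once in that form, the fundamental theorem of calculus yields a continuous derivative even though $f(u(\cdot))$ is only continuous and not differentiable. I would then check that the two branches of the definition glue together to give a $C^1$ function at the junction $t=\tau_0-\tau$.

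On the branch $t\in[-\tau,\tau_0-\tau]$ I substitute $s=t+\tau-a$ in the integral, rewriting $(Tu)(t)$ as $e^{-\gamma(t+\tau)}H_0(u)+e^{-\gamma(t+\tau-\tau_0)}\int_t^{t+\tau-\tau_0}g(s)\,ds$ with $g(s):=f(u(s))e^{\gamma s}$ continuous. Then $t\mapsto\int_t^{t+\tau-\tau_0}g(s)\,ds$ is $C^1$ with derivative $g(t+\tau-\tau_0)-g(t)$, and multiplying by the smooth prefactor and adding the smooth term $e^{-\gamma(t+\tau)}H_0(u)$ keeps the branch $C^1$. On the branch $t\in(\tau_0-\tau,0]$ I handle the three terms separately: the last integral is treated by the same substitution $s=t+\tau-a$, while for the self-referential middle integral the substitution $s=t-a$ gives $e^{-\gamma(t-\tau_0)}\int_{-\tau}^{t-\tau_0}f((Tu)(s))e^{\gamma s}\,ds$. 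Here I use that $Tu$ is continuous on $[-\tau,0]$ (it lies in the codomain $C([-\tau,0],\mathbb{R}^+)$, with continuity established as in Lemma \ref{th:T sol}), so the integrand is again continuous and the integral, having endpoint $t-\tau_0\in[-\tau,-\tau_0]$, is $C^1$ in $t$ with derivative $f((Tu)(t-\tau_0))e^{\gamma(t-\tau_0)}$. Thus each branch is $C^1$ on its closed subinterval.

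The main obstacle is the matching at $t=\tau_0-\tau$: I must show the one-sided derivatives from the two branches agree. Evaluating both one-sided derivatives via the Leibniz rule, the terms involving $H_0(u)$ and $\int_{\tau_0-\tau}^{0}g(s)\,ds$ and $g(\tau_0-\tau)$ all cancel, and the discrepancy collapses to $f((Tu)(-\tau))-f(u(0))$. This vanishes precisely because of the consistency relation $(Tu)(-\tau)=u(0)$ recorded in the proof of Lemma \ref{th:T sol}; this is the one nonroutine input and the crux of the argument. Continuity of $(Tu)$ at the junction follows by the same bookkeeping (both branches there evaluate to $e^{-\gamma\tau_0}H_0(u)+\int_{\tau_0}^{\tau}f(u(\tau_0-a))e^{-\gamma(a-\tau_0)}\,da$), so $(Tu)$ is $C^1$ on all of $[-\tau,0]$.

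As a cross-check that also sidesteps the junction computation, one may invoke Lemma \ref{th:T sol} directly: $(Tu)(t)=w(t+\tau)$, where $w$ is the solution of \eqref{eq:dde1} with initial function $u$. Since $w$ satisfies the delay equation and its right-hand side $-\gamma w+f(w(\cdot-\tau_0))-f(w(\cdot-\tau))e^{-\gamma(\tau-\tau_0)}$ is continuous for positive times, $w$ is $C^1$ on $(0,\infty)$ with derivative extending continuously to $0^+$; translating back shows $(Tu)$ is $C^1$ on $[-\tau,0]$.
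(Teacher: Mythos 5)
Your proof is correct, and your primary route genuinely differs from the paper's: what you relegate to a final ``cross-check'' is, in essence, the paper's entire proof. The paper simply quotes the differentiation already performed in the proof of Lemma~\ref{th:T sol} to assert that $Tu$ satisfies the piecewise delay differential equation \eqref{eq:T DDE1}, whose right-hand side is continuous, and stops there. Your main argument instead verifies smoothness directly from the definition \eqref{eq:def T}: after the substitutions $s=t+\tau-a$ and $s=t-a$, the parameter $t$ enters only through smooth prefactors and through endpoints of integrals of continuous integrands (your computations here check out, including the fact that the self-referential middle integral becomes $e^{-\gamma(t-\tau_0)}\int_{-\tau}^{t-\tau_0}f((Tu)(s))e^{\gamma s}\,ds$ with endpoint $t-\tau_0\in(-\tau,-\tau_0]$, so no circularity arises), and the one-sided derivatives at $t=\tau_0-\tau$ differ by exactly $f((Tu)(-\tau))-f(u(0))$, which vanishes by the consistency relation $(Tu)(-\tau)=u(0)$. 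This is precisely the point the paper leaves implicit: continuity of the right-hand side of \eqref{eq:T DDE1} across the branch point also hinges on $(Tu)(-\tau)=u(0)$, since at the junction the first line contributes $f(u(0))$ while the (corrected) second line contributes $f((Tu)(-\tau))$, and the paper never checks this. Indeed, your branch derivatives expose two slips in the paper's \eqref{eq:T DDE1}: the factor multiplying $f(u(t))$ should be $e^{-\gamma(\tau-\tau_0)}$ rather than $e^{\gamma(\tau-\tau_0)}$, and the delayed argument in the second line should be $(Tu)(t-\tau_0)$, which lies in $[-\tau,0]$, rather than $(Tu)(t+\tau-\tau_0)$, which does not. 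In short, the paper's proof buys brevity by reusing Lemma~\ref{th:T sol}; yours buys a self-contained verification in which the junction matching --- the one nontrivial point --- is made fully explicit.
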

\begin{proof}
For any $u\in \mathcal{D}$, it is easy to see that $(Tu)(t)$ is continuous in $t\in [-\tau, 0]$. From the proof of Lemma \ref{th:T sol}, we have
\begin{equation}
\label{eq:T DDE1}
\frac{d}{dt}(Tu)(t)=
\left\{
\begin{array}{ll}
-\gamma (Tu)(t)+f(u(t+\tau-\tau_0))-f(u(t))e^{\gamma(\tau-\tau_0)},&t\in[-\tau,\tau_0-\tau], \\
-\gamma (Tu)(t)+f((Tu)(t+\tau-\tau_0))-f(u(t))e^{\gamma(\tau-\tau_0)},& t\in(\tau_0-\tau,0].
\end{array}
\right.
\end{equation}
Therefore, $Tu$ is continuously differentiable. \qed
\end{proof}

Now we are ready to prove Theorem \ref{th:T main}.
\begin{proof}[Proof of Theorem \ref{th:T main}]

(1) 
 For any $u\in \mathcal{D}$, it is easy to have $Tu\in C([-\tau, 0], \mathbb{R}^+)$. In addition, from \eqref{eq:def T}, we have
 \begin{eqnarray*}
(Tu)(0)&=&H_0(u)+   \int_{\tau_0}^{\tau}f((Tu)(-a))e^{-\gamma(a-\tau_0)}da\\
  &\geq&\int_{\tau_0}^{\tau}f((Tu)(-a))e^{-\gamma(a-\tau_0)}da \geq 0.
\end{eqnarray*}
Thus, $Tu\in\mathcal{D}$, and hence $T\mathcal{D}\subseteq\mathcal{D}$.

(2) 
To prove that $T$ is completely continuous, we only need to prove that for any bounded sequence $\{u_n\}_{n\geq1}\subseteq   \mathcal{D}$,  $\{Tu_n\}_{n\geq1}$  has a convergent subsequence.   To this end, from the Arzel\`{a}-Ascoli theorem, we only need to  show that the set of continuous functions $\{Tu_n\}_{n\geq1}$ is uniformly bounded and equicontinuous.

Assume that $\|u_n\|\leq M<\infty$ for any $n\geq1$. From the fact that $f$ is bounded, i.e. the inequality \eqref{eq:f bdd}, and the definition of $T$ in \eqref{eq:def T}, we have
$$   |Tu_n(t)|\leq|u_n(0)|+2\int_{\tau_0}^{\tau}\bar{f}e^{-\gamma(a-\tau_0)}da,\ \forall t\in[-\tau,0]. $$
Thus,
$$\|Tu_n\|\leq \|u_n\|+ 2\bar f(\tau- \tau_0)\leq M+2\bar{f}(\tau-\tau_0).$$
which implies that $\{Tu_n\}_{n\geq1}$ is uniformly bounded.

The sequence $\{Tu_n\}$ is equicontinuous if
the sequence $\left\{\dfrac{d}{dt}\left(Tu_n\right)\right\}$
is uniformly bounded in $t\in[-\tau,0]$, which is proved below. From \eqref{eq:T DDE1} in Lemma \ref{th:Tu differentiable}, it is easy to have that 
$$\left\|\frac{d}{dt}Tu_n(t)\right\|\leq\gamma\|Tu_n(t)\|+2\bar{f}\leq \gamma(M+2\bar{f}(\tau-\tau_0))+2\bar{f}$$ is bounded.
Thus, $\{Tu_n\}$ is uniformly bounded and equicontinuous, and therefore $T$ is completely continuous.

(3) 
From \eqref{eq:def T}, for any $u\in \mathcal{D}$, we have
$$ |(T^nu)(t)|=|T(T^{n-1}u)(t)|\leq e^{-\gamma(t+\tau)}H_0(T^{n-1}u)+(\tau-\tau_0)\bar{f},\quad \forall t\in [-\tau, 0].$$
 From Lemma \ref{th:H lemma}, $H_0(T^{n-1}u)=e^{-\gamma (n-1)\tau}H_0(u)$, thus,
 \begin{equation}\label{eq:T as F 1}
\|(T^nu)\|\leq H_0(u)+(\tau-\tau_0)\bar{f},
\end{equation}
which implies that $\{T^nu\}_{n\in\mathbb{N}}$ is uniformly bounded.

(4)  
Since the sequence $\{T^nu\}_{n\in\mathbb{N}}$
  is bounded in $C([-\tau,0],\mathbb{R}^+)$ and $T$ is completely continuous,
  there exists a subsequence $n_k\to+\infty$ such that $T^{n_k}u$ converges to a function in $C([-\tau,0],\mathbb{R}^+)$. Hence for any
 $u\in \mathcal{D}$, $\omega_T(u)$ is nonempty.

(5)
For any $v\in \omega_T(u)$, there exists a sequence $n_k\to+
  \infty$ such that $v=\lim_{k\rightarrow+\infty}T^{n_k}u$.
  From Lemma \ref{th:H cont}, $H_0(u)$ is continuous with respect to $u$. Thus, we have
  \begin{equation}
   H_0(v)=\lim_{k\rightarrow+\infty}H_0(T^{n_k}u)=
\lim_{k\rightarrow+\infty}e^{-\gamma n_k\tau}H_0(u)=0,
 \end{equation}
 and hence $v\in\mathcal{D}_0$. Therefore   $\omega_T(u)\subseteq  \mathcal{D}_0$. \qed
\end{proof}

From Theorem \ref{th:T main}, informations about the long time behavior of positive solutions of \eqref{eq:dde1} are contained in $\mathcal{D}_0$. In particular, all solutions of \eqref{eq:dde1} with initial function in
$\mathcal{D}_0$ will be of great interest. In rest of this section, we will focus our discussion on $\mathcal{D}_0$.

From Lemma \ref{th:H lemma}, we immediately have the following corollary.
\begin{corollary}\label{prop:T restriction}
The restriction of $T$ on $\mathcal{D}_0$ is an
operator from $\mathcal{D}_0$ into $\mathcal{D}_0$.
\end{corollary}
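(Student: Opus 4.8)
The plan is to verify the two defining properties of membership in $\mathcal{D}_0$ for the image $Tu$ of an arbitrary $u\in\mathcal{D}_0$: namely that $Tu\in C([-\tau,0],\mathbb{R}^+)$ and that $H_0(Tu)=0$. Both follow almost immediately from results already established, so the corollary is essentially a specialization of Lemma \ref{th:H lemma}.

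First I would record the inclusion $\mathcal{D}_0\subseteq\mathcal{D}$, so that any $u\in\mathcal{D}_0$ also lies in $\mathcal{D}$. By part (\ref{list:theoremT1}) of Theorem \ref{th:T main} we have $T(\mathcal{D})\subseteq\mathcal{D}$; in particular $Tu\in C([-\tau,0],\mathbb{R}^+)$, so the nonnegativity requirement is met and $Tu$ is a legitimate element of the ambient function space. This disposes of the first property without any additional computation.

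The second and essential step is to evaluate $H_0(Tu)$. Here I would invoke Lemma \ref{th:H lemma} with $n=1$, which gives $H_0(Tu)=e^{-\gamma\tau}H_0(u)$. Since $u\in\mathcal{D}_0$ means precisely that $H_0(u)=0$, we obtain $H_0(Tu)=e^{-\gamma\tau}\cdot 0=0$, and hence $Tu\in\mathcal{D}_0$. As $u$ was arbitrary, this yields $T(\mathcal{D}_0)\subseteq\mathcal{D}_0$, which is the assertion of the corollary.

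There is no genuine obstacle here: the result is a one-step consequence of the exponential decay identity of Lemma \ref{th:H lemma} together with the invariance $T(\mathcal{D})\subseteq\mathcal{D}$. The only point deserving explicit care is to note the inclusion $\mathcal{D}_0\subseteq\mathcal{D}$, so that the nonnegativity of $Tu$ already proved on $\mathcal{D}$ transfers to the zero-level set $\mathcal{D}_0$, and to apply the lemma at the single index $n=1$ rather than in its full inductive generality.
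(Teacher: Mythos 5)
Your proof is correct and follows essentially the same route as the paper, which likewise deduces $H_0(Tu)=e^{-\gamma\tau}H_0(u)=0$ from Lemma \ref{th:H lemma} (at $n=1$). Your extra remark that $Tu\in C([-\tau,0],\mathbb{R}^+)$ via $T(\mathcal{D})\subseteq\mathcal{D}$ and $\mathcal{D}_0\subseteq\mathcal{D}$ merely makes explicit a point the paper leaves implicit.
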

\begin{proof}
From Lemma \ref{th:H lemma}, for any $u\in\mathcal{D}_0$, we have $H_0(Tu)=e^{-\gamma\tau}H_0(u)=0$, and hence, $Tu\in\mathcal{D}_0$. \qed
\end{proof}

Corollary \ref{prop:T restriction} implies that if $u\in\mathcal{D}_0$, i.e.
$H_0(u)=H(u,0)=0$, then $H(u,n\tau)=0$
for any $n\in\mathbb{N}$. In the following Proposition \ref{cor:1}, we
prove further that
if $H(u,t^*)=0$ at one time point $t^*>0$, then $H(u,t)=0$ for all $t\in\mathbb{R}^+$.

\begin{proposition}
\label{cor:1}
Assume that $u(t)$ is a solution of \eqref{eq:dde1}, then
\begin{equation}
\label{eq:H decay}
 H(u,t_1)=e^{-\gamma (t_1-t_2)}H(u,t_2).
\end{equation}
In particular, if there is $t^*>0$ such that
\begin{equation}\label{eq:cond3}
H(u,t^*) =0,
\end{equation}
then \eqref{eq:cond3} is satisfied for any $t\in \mathbb{R}^+$.
\end{proposition}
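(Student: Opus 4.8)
The plan is to show that, along any solution $u$ of \eqref{eq:dde1}, the scalar function $t\mapsto H(u,t)$ satisfies the linear ordinary differential equation $\frac{d}{dt}H(u,t) = -\gamma\,H(u,t)$. Once this is established, \eqref{eq:H decay} follows at once by integration: the unique solution of this scalar ODE is $H(u,t) = H(u,t_2)\,e^{-\gamma(t-t_2)}$, and evaluating at $t=t_1$ gives exactly the claimed identity. Equivalently, I would verify that $e^{\gamma t}H(u,t)$ is constant in $t$.

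To obtain the ODE, I would differentiate the definition \eqref{eq:def H} directly, writing $H(u,t) = u(t) - \int_{\tau_0}^{\tau} f(u(t-a))e^{-\gamma(a-\tau_0)}\,da$. The first term contributes $u'(t)$, for which I substitute the right-hand side of \eqref{eq:dde1}. For the integral term I would use the substitution $s = t-a$ to rewrite it as $\int_{t-\tau}^{t-\tau_0} f(u(s))e^{-\gamma(t-s-\tau_0)}\,ds$ and then apply the Leibniz rule. This is precisely the manipulation already carried out in the proof of Lemma \ref{th:T sol}: the two boundary terms produce $f(u(t-\tau_0)) - f(u(t-\tau))e^{-\gamma(\tau-\tau_0)}$, while differentiating the exponential inside the integral reproduces $-\gamma$ times the original integral.

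Combining the two pieces, the boundary terms $f(u(t-\tau_0)) - f(u(t-\tau))e^{-\gamma(\tau-\tau_0)}$ arising from the Leibniz rule cancel exactly against the nonlinear terms in $u'(t)$ supplied by \eqref{eq:dde1}, leaving $-\gamma u(t) + \gamma\int_{\tau_0}^{\tau} f(u(t-a))e^{-\gamma(a-\tau_0)}\,da = -\gamma\,H(u,t)$. This is the desired identity $\frac{d}{dt}H(u,t) = -\gamma\,H(u,t)$, valid for all $t>0$, and it yields \eqref{eq:H decay}.

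Finally, the ``in particular'' statement is immediate: if $H(u,t^*)=0$ for some $t^*>0$, then taking $t_2=t^*$ in \eqref{eq:H decay} gives $H(u,t) = e^{-\gamma(t-t^*)}H(u,t^*) = 0$ for every $t\in\mathbb{R}^+$. I expect the only mildly delicate step to be the Leibniz differentiation of the integral with its moving limits and $t$-dependent integrand; but since this exact computation already appears in Lemma \ref{th:T sol}, it presents no real obstacle, and the essential content of the proposition is simply the exact cancellation of the nonlinear boundary terms against the delay differential equation itself.
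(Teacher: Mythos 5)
Your proof is correct, but it runs in the opposite direction from the paper's. The paper proves the proposition as a one-line corollary of the representation formula \eqref{eq:sol} already established in Theorem \ref{th:1}: since $u(t)=e^{-\gamma t}H_0(u)+\int_{\tau_0}^{\tau}f(u(t-a))e^{-\gamma(a-\tau_0)}\,da$, one simply reads off $H(u,t)=e^{-\gamma t}H_0(u)$ and compares the values at $t_1$ and $t_2$. You instead differentiate $H(u,t)$ directly, apply the Leibniz rule to the moving-limit integral, and exploit the exact cancellation of the boundary terms $f(u(t-\tau_0))-f(u(t-\tau))e^{-\gamma(\tau-\tau_0)}$ against the corresponding terms of \eqref{eq:dde1}, obtaining the scalar ODE $\frac{d}{dt}H(u,t)=-\gamma H(u,t)$ and integrating it. The computation is sound --- it is essentially the computation in the proof of Lemma \ref{th:T sol} read backwards, and the differentiability you need holds because $u$ is continuous, $f$ is continuous, and $u$ satisfies \eqref{eq:dde1} for $t>0$. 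As for what each route buys: the paper's argument is shorter because \eqref{eq:sol} is already in hand, while yours is self-contained at the level of the delay equation itself and makes transparent the ``first integral''/``Lyapunov functional'' interpretation that the paper only records afterwards, in the two remarks following the proposition (there the identity $\frac{d}{dt}H(u,t)=-\gamma H(u,t)$ is derived as a consequence of \eqref{eq:H decay}, whereas you take it as the starting point). One small point worth a sentence in a written-up version: your ODE argument yields \eqref{eq:H decay} only for $t_1,t_2>0$; to cover $t=0$, as the statement's ``for any $t\in\mathbb{R}^+$'' suggests, note that $t\mapsto H(u,t)$ is continuous at $t=0^+$ (immediate from the continuity of $u$ and $f$), so the identity extends by passing to the limit.
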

\begin{proof}
From \eqref{eq:sol}, we have
$$ H(u,t) = u(t)-\int^{ \tau}_{\tau_0}  f(u( t-a))e^{-\gamma( a-\tau_0)}da=e^{-\gamma t}H_0(\phi)=e^{-\gamma t}H_0(u). $$
Thus we have
$$H(u,t_1)=e^{-\gamma (t_1-t_2)}H(u,t_2) ,~\forall~ t_1,t_2>0.$$
The second part is obvious when we take $t_2=t^*$ in \eqref{eq:H decay}. 
\qed
\end{proof}

\begin{remark} From \eqref{eq:H decay}, we have
\begin{equation}
F(u,t) = e^{\gamma t} H(u,t) = H_0(u)
\end{equation}
for all $t>0$. Thus, $F(u,t)$ is a `first integral' of the delay differential equation \eqref{eq:dde1}.
\end{remark}

\begin{remark} In \eqref{eq:H decay}, if we take $t_1 = t$, and $t_2=0$, then
$$H(u,t) = e^{-\gamma t} H_0(u)$$
which implies
$$\dfrac{d\ }{dt}(H(u,t)) = - \gamma H(u,t).$$
We note that $H_0(u)\geq 0$ for any $u\in \mathcal{D}$. Thus,   $H(u,t)$ is a `Lyapunov functional' of the equation \eqref{eq:dde1} on $\mathcal{D}$. Furthermore, for any solution of \eqref{eq:dde1} with initial function $\phi\in \mathcal{D}$, the solution approaches $\mathcal{D}_0$ on a time scale $1/\gamma$.
\end{remark}

In the proof of Theorem \ref{th:T main}, we have shown that
$\mathcal{D}_0$ is nonempty.  The simplest case is for a constant function $u(t)\equiv u_0$. Then $u_0$ should satisfy the equation
$$u_0 = \int_{\tau_0}^\tau f(u_0) e^{-\gamma (a - \tau_0)} d a = f(u_0) (1 - e^{-\gamma \tau_1})/\gamma,$$
i.e., $u \equiv u_0$ is a steady state of the equation
\begin{equation}
\label{eq:dde3}
\displaystyle\frac{d u}{d t} = -\gamma u + f(u(t-\tau_0)) -
f(u(t-\tau)) e^{-\gamma(\tau-\tau_0)}.
\end{equation}

It is not trivial to find a non-constant function in $\mathcal{D}_0$, i.e. a function $ u(t)$ such that
\begin{equation}
\label{eq:cond2}  u(0) = \int_{\tau_0}^\tau f( u(-a))
e^{-\gamma (a-\tau_0)} d a.
\end{equation}
Proposition \ref{th:exD0} gives an explicit way to construct a function in $\mathcal{D}_0$, which is important when we solve equation \eqref{eq:dde1} numerically and perform a bifurcation analysis.

Let $g(t)$ be a function in $C([0,\tau],\mathbb{R}^+)$.  We define an operator ${F_g}: C([0,\tau], \mathbb{R}^+)\to C([-\tau, 0], \mathbb{R})$ as follows.

When $\tau_1=\tau-\tau_0\geq\tau_0$, ${F_g} u$ is defined as
\begin{equation}
\label{eq:phi1}
({F_g} u)(t) =  \left\{
  \begin{array}{ll}
      f( u(t-\tau_0)) - g(-t) e^{-\gamma \tau_1}, & -\tau_0 < t \leq 0,\\
      f( u(t-\tau_0)) - g(-t) e^{-\gamma (t + \tau)}, &  -\tau_1 < t \leq -\tau_0,\\
      g(- (t + \tau_1)) - g(-t) e^{-\gamma (t+\tau)}, & -\tau < t \leq-\tau_1.
  \end{array}\right.
\end{equation}

When $\tau_1=\tau-\tau_0<\tau_0$, ${F_g} u$   is defined as
\begin{equation}
\label{eq:phi2}
  ({F_g} u)(t) = \left\{
  \begin{array}{ll}
      f( u(t-\tau_0)) - g(-t) e^{-\gamma \tau_1}, & -\tau_1 < t \leq 0,\\
      g(- (t + \tau_1)) - g(-t) e^{-\gamma \tau_1}, &  -\tau_0 < t \leq -\tau_1,\\
      g(- (t + \tau_1)) - g(-t) e^{-\gamma (t+\tau)}, & -\tau < t \leq-\tau_1.
  \end{array}\right.
\end{equation}

It is easy to see that $(F_g u)(t)$ is a continuous function of $t$ in $[-\tau, 0]$.
\begin{proposition}
\label{th:exD0}
For any $g \in C([0,\tau], \mathbb{R}^+)$,
let $ u(t) ~(-\tau \leq t \leq 0)$ be the solution of the differential equation
\begin{equation}
\left\{
\label{eq:phi lemma}
\begin{array}{l}
  \displaystyle\frac{d  u}{d t} = -\gamma  u+ {F_g} u,\\
  \displaystyle u(-\tau) = \int_{\tau_0}^{\tau}g(t)e^{-\gamma\tau}dt.
\end{array}
\right.
\end{equation}
Then $ u(t)\in\mathcal{D}_0$. If further, $g$ satisfies
\begin{eqnarray}\label{eq:g0}
  g(0) &=& f\left(\int_{\tau_0}^{\tau}g(t)e^{-\gamma\tau}dt\right),
\end{eqnarray}
then, $ u(t)$ is continuously differentiable, and
\begin{equation}\label{eq:compatibility}
 \lim_{t\rightarrow0^-} u'(t)=-\gamma u(0)+f( u(-\tau_0))-f( u(-\tau))e^{-\gamma(\tau-\tau_0)}.
\end{equation}
\end{proposition}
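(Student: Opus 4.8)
The plan is to split the statement into two parts and drive both with the variation-of-constants representation of the linear problem \eqref{eq:phi lemma}. Writing that equation as $(e^{\gamma t}u)'=e^{\gamma t}(F_g u)(t)$ and integrating from $-\tau$ to $0$ gives the single identity
\[
u(0)=e^{-\gamma\tau}u(-\tau)+\int_{-\tau}^{0}e^{\gamma t}(F_g u)(t)\,dt,
\]
which is the engine of the whole argument. Since $u\in\mathcal D_0$ means exactly $H_0(u)=0$ (together with $u\ge 0$, which I would check separately from the explicit solution), the first part reduces to showing that the right-hand side above equals $\int_{\tau_0}^{\tau}f(u(-a))e^{-\gamma(a-\tau_0)}\,da$, i.e.\ the integral term appearing in $H_0$.

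To prove $H_0(u)=0$ I would insert the piecewise formula \eqref{eq:phi1} (case $\tau_1\ge\tau_0$) into the integral and break $\int_{-\tau}^{0}$ over the three subintervals $(-\tau,-\tau_1]$, $(-\tau_1,-\tau_0]$, $(-\tau_0,0]$. The contributions containing $f(u(t-\tau_0))$ live on $(-\tau_1,0]$; the substitution $s=t-\tau_0$ merges them into $\int_{-\tau}^{-\tau_0}e^{\gamma(s+\tau_0)}f(u(s))\,ds$, and reversing orientation via $a=-s$ turns this into precisely the integral in $H_0$. It then remains to show that all the $g$-terms together with $e^{-\gamma\tau}u(-\tau)$ cancel. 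The key move is the substitution $r=t+\tau_1$ in the $g(-(t+\tau_1))$ contribution on $(-\tau,-\tau_1]$: it is carried exactly onto the $g(-t)e^{-\gamma\tau_1}$ contribution on $(-\tau_0,0]$ and cancels it. The surviving $g(-t)e^{-\gamma(t+\tau)}$ pieces collapse (the factor $e^{\gamma t}e^{-\gamma(t+\tau)}=e^{-\gamma\tau}$ is constant) to $e^{-\gamma\tau}\int_{\tau_0}^{\tau}g(a)\,da$, which the boundary contribution $e^{-\gamma\tau}u(-\tau)$ is designed to cancel through the prescribed initial value. This gives $H_0(u)=0$. The case $\tau_1<\tau_0$ is identical with \eqref{eq:phi2} in place of \eqref{eq:phi1}; only the positions of the breakpoints change.

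For the second part I would first observe that the sole place where $F_g u$ can fail to be continuous is the interior breakpoint $t=\tau_0-\tau$, where matching the two adjacent pieces of \eqref{eq:phi1} (resp.\ \eqref{eq:phi2}) demands $g(0)=f(u(-\tau))$. Because $u(-\tau)=\int_{\tau_0}^{\tau}g\,e^{-\gamma\tau}$, this is exactly condition \eqref{eq:g0}; under it $F_g u$ is continuous, so $u'=-\gamma u+F_g u$ is continuous and $u\in C^1([-\tau,0])$. The compatibility relation \eqref{eq:compatibility} then follows by evaluating the top piece of \eqref{eq:phi1}/\eqref{eq:phi2} at $t=0$, namely $\lim_{t\to 0^-}u'(t)=-\gamma u(0)+f(u(-\tau_0))-g(0)e^{-\gamma\tau_1}$, and substituting $g(0)=f(u(-\tau))$ from \eqref{eq:g0} and $\tau_1=\tau-\tau_0$. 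It is worth noting that the same identity $g(0)=f(u(-\tau))$ thus plays a double role: it removes the interior jump of $F_g u$ and it supplies the matching at $t=0$.

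The main obstacle is purely the bookkeeping in the second paragraph: keeping the three shifting subintervals, the two delay substitutions, and the several exponential factors aligned so that the $g$-terms telescope cleanly, and then repeating this for both orderings of $\tau_0$ and $\tau_1$. The conceptual content is small — the cancellation is engineered into the definition of $F_g$ — but it is easy to misplace a factor of $e^{-\gamma\tau}$, so I would first verify the $\gamma=0$ reduction as a sanity check before reintroducing the exponentials.
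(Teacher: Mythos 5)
Your proposal is, in structure, the same proof as the paper's: variation of constants for \eqref{eq:phi lemma} evaluated at $t=0$, the split of $\int_{-\tau}^{0}e^{\gamma s}(F_gu)(s)\,ds$ over the three subintervals of \eqref{eq:phi1}, the shift $r=s+\tau_1$ carrying the $g(-(s+\tau_1))$ contribution onto the $g(-t)e^{-\gamma\tau_1}$ contribution, the merging of the two $f$-pieces into $\int_{\tau_0}^{\tau}f(u(-a))e^{-\gamma(a-\tau_0)}\,da$, and, for the second claim, continuity of $F_gu$ at the single problematic breakpoint $t=\tau_0-\tau$ via $g(0)=f(u(-\tau))$, followed by evaluation of $u'$ at $0^-$. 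Your observation that $t=\tau_0-\tau$ is the only breakpoint where continuity is not automatic (at $t=-\tau_0$ the matching holds because $e^{-\gamma(t+\tau)}=e^{-\gamma\tau_1}$ there) is correct in both cases \eqref{eq:phi1} and \eqref{eq:phi2}, and matches what the paper does.

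One substantive warning, at exactly the spot you flagged as dangerous. Carrying your bookkeeping through, the surviving $g$-terms sum to $-e^{-\gamma\tau}\int_{\tau_0}^{\tau}g(a)\,da$ while the boundary term is $e^{-\gamma\tau}u(-\tau)$, so your computation actually yields
\begin{equation*}
H_0(u)=e^{-\gamma\tau}\left(u(-\tau)-\int_{\tau_0}^{\tau}g(a)\,da\right),
\end{equation*}
and the cancellation you assert requires $u(-\tau)=\int_{\tau_0}^{\tau}g(a)\,da$, \emph{not} the stated value $\int_{\tau_0}^{\tau}g(t)e^{-\gamma\tau}\,dt=e^{-\gamma\tau}\int_{\tau_0}^{\tau}g(t)\,dt$; with the initial value as printed one gets $H_0(u)=e^{-\gamma\tau}\left(e^{-\gamma\tau}-1\right)\int_{\tau_0}^{\tau}g\,dt<0$ for nontrivial $g$. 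This stray factor of $e^{-\gamma\tau}$ is a defect of the proposition itself: the paper's own proof writes the variation-of-constants solution as $u(t)=e^{-\gamma t}\left(\int_{\tau_0}^{\tau}g\,e^{-\gamma\tau}\,dt+\int_{-\tau}^{t}e^{\gamma s}(F_gu)(s)\,ds\right)$, whose constant term corresponds to $u(-\tau)=\int_{\tau_0}^{\tau}g(t)\,dt$ and is therefore inconsistent with the stated initial condition. So your plan reproduces the paper's argument, misplaced exponential included, rather than detecting it; executing the three-interval computation rather than sketching it would have exposed the mismatch. Note also that your proposed $\gamma=0$ sanity check is precisely blind to this defect, since the two candidate initial values coincide when $\gamma=0$. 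Finally, like the paper, you defer but never carry out the verification that $u\geq0$, which is needed for $u\in\mathcal{D}_0\subseteq C([-\tau,0],\mathbb{R}^+)$ since $F_gu$ may take negative values; at least you flag this, which the paper does not.
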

\begin{proof}
We only prove this for the case $\tau_1\geq\tau_0$,
the $\tau_1<\tau_0$ case is similar.

We just need to solve the equation \eqref{eq:phi lemma} directly and  verify \eqref{eq:cond2}.
Using variation of parameters, we have
\begin{eqnarray}
 u(t) &=& e^{-\gamma t}\left( \int_{\tau_0}^{\tau}g(t)e^{-\gamma\tau}dt+\int_{-\tau}^te^{\gamma s}({F_g} u)(s)ds\right). \nonumber
\end{eqnarray}
Letting $t=0$, we have
\begin{eqnarray*}
  u ( 0)&=& \int_{\tau_0}^{\tau}g(t)e^{-\gamma\tau}dt +
 \int_{-\tau}^{-\tau_1}e^{\gamma t }( g( -s +\tau_1)-g( -s )e^{-\gamma(s +\tau)})ds \\
 &&{} +\int_{-\tau_1}^{-\tau_0}e^{\gamma t }( f(  u( s -\tau_0))-g( -s )e^{-\gamma(s +\tau)})ds \\
 && {}+\int_{-\tau_0}^{0}e^{\gamma t }( f(  u( s -\tau_0))-g( -s )e^{-\gamma\tau_1})ds  \\
 &=& \int_{\tau_0}^{\tau}g(t)e^{-\gamma\tau}dt+\int_{-\tau}^{-\tau_1}g( -s +\tau_1)e^{\gamma t }dt
 -\int_{-\tau}^{-\tau_0}g( -s )e^{-\gamma\tau} ds \\
 && +\int_{-\tau_1}^{0}f(  u( s -\tau_0))e^{\gamma t }ds -\int_{-\tau_0}^{0} g( -s )e^{\gamma(s -\tau_1)}ds \nonumber\\
 &=&  \int_{\tau_0}^{\tau}g(t)e^{-\gamma\tau}dt+\int_{-\tau_0}^{0}g( -t)e^{\gamma(s -\tau_1)}dt-\int_{-\tau}^{-\tau_0}g( -t)e^{-\gamma\tau}dt\\
 &&+\int_{-\tau_1}^{0}f(  u ( s -\tau_0))e^{\gamma\tau}dt-\int_{-\tau_0}^{0}g( -t)e^{\gamma(s -\tau_1)}dt.\nonumber\\
 &=& \int_{\tau_0}^{\tau}g(t)e^{-\gamma\tau}dt-\int_{\tau_0}^{\tau}g(s )e^{-\gamma\tau}dt+\int_{\tau_0}^{\tau}f(  u ( -t))e^{-\gamma(s -\tau_0)}dt\\
 &=&\int_{\tau_0}^{\tau}f(  u ( -s))e^{-\gamma(s -\tau_0)}dt.
\end{eqnarray*}
Thus, $u\in\mathcal{D}_0$ and the first part is proved.

To prove the continuity of $\displaystyle\frac{d u}{dt}$, we
only need to show that $\displaystyle\frac{du}{dt}$ is continuous at $t=-\tau_1$.
From \eqref{eq:g0},
$$g(0)=f\left(\int_{\tau_0}^{\tau}g(t)e^{-\gamma\tau}dt\right)=f( u(-\tau)),$$
we have
$$f( u(-\tau)) - g(\tau_1) e^{\gamma\tau_0}=
g(0) - g(\tau_1) e^{\gamma \tau_0}.$$
Thus, $\dfrac{du}{dt}$ is continuous at $t=-\tau_1$ from \eqref{eq:phi lemma} and \eqref{eq:phi1}.

To verify \eqref{eq:compatibility}, we note that
\begin{eqnarray*}
  \lim_{t\rightarrow0^-} u'(t) &=&\lim_{t\rightarrow0^-} \left(-\gamma  u(t)+f( u(t-\tau_0))-g(-t)e^{-\gamma\tau_1}\right)\\
&=&-\gamma u(0)+f( u(-\tau_0))-g(0)e^{-\gamma\tau_1} \\
  &=& -\gamma u(0)+f( u(-\tau_0))-f( u(-\tau))e^{-\gamma(\tau-\tau_0)}.
\end{eqnarray*}
The theorem has been proved. \qed
\end{proof}

Biologically, the function $g(a)$ in Proposition \ref{th:exD0} corresponds to the initial age distribution of cells 
differentiation from stem cells. From Propostion \ref{th:exD0}, if we choose a function $g(a)$
that satisfies \eqref{eq:g0}, then the solution of \eqref{eq:phi lemma} can
serve as an initial function of the equation \eqref{eq:dde1},
and the solution of \eqref{eq:dde1} therewith is positive
for all $t>0$, differentiable in $t\in(-\tau,+\infty)$ and satisfies $H(u,t)=0$ for all $t>0$.

\section{Examples}\label{sec:conclusion}
In this section, we will show numerical results for two examples, with
\begin{equation}
\label{eq:ex f1}
f(u)=\dfrac{4}{1+u^\alpha},
\end{equation}
and
\begin{equation}
\label{eq:ex f2}
 f(u)=\dfrac{2}{\pi}\arctan(\alpha\sin(2\pi u))+1,
\end{equation}
respectively. The $\omega$-limit set is illustrated at figure \ref{fig:omega} in
which
$N(\omega_T)$ is a map from $\mathbb{R}$ to subsets of $\mathbb{R}$   defined as
$$N(\omega_T)=\bigcup\limits_{u\in\mathcal D_0}\bigcup\limits_{v\in\omega_T(u)}
\left\{\int_{-\tau}^{0}v(t)dt\right\}.$$
\begin{figure}[htbp]
\centering
\includegraphics[width=5cm]{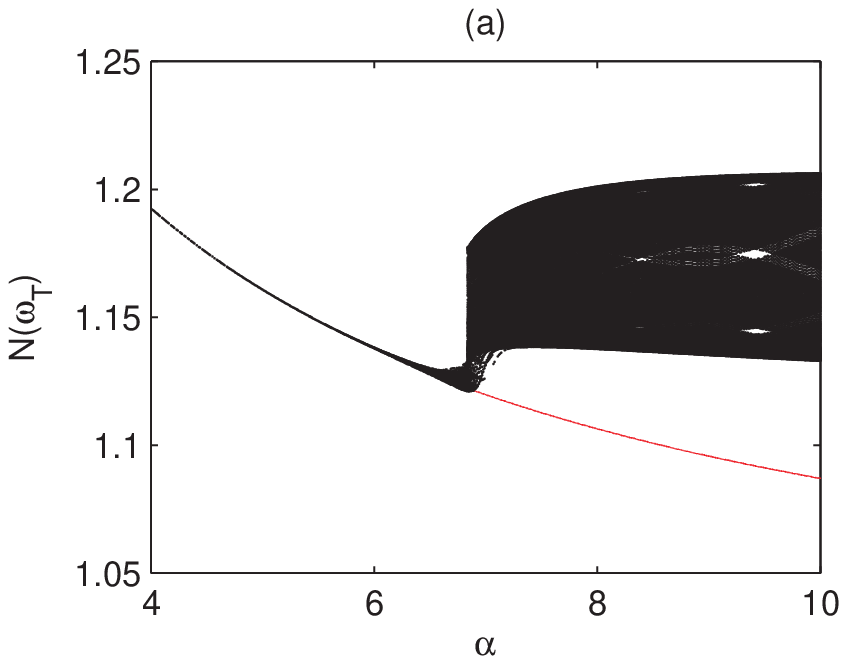}
\hspace{0.2cm}
\includegraphics[width=5cm]{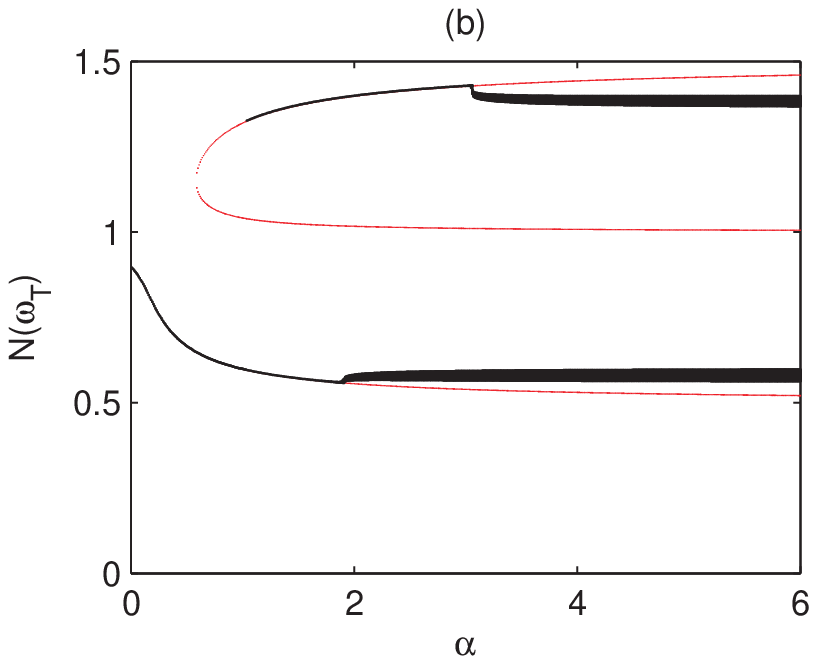}
\label{fig:omega}\\
  \caption{Illustration of the $\omega$-limit sets of positive solutions of \eqref{eq:dde1} with nonlinear function $f$ defined as \eqref{eq:ex f1} (a), and \eqref{eq:ex f2}, respectively. Black points are $L^1$ norms of stable steady states or oscillatory solutions, red points for unstable steady states. Parameters used are $\tau_0 = 0.1, \tau = 1, \gamma = 0.005$.}
\end{figure}

In the simulation, we choose $u\in\mathcal{D}_0$ randomly according to Theorem \ref{th:exD0}
and generate a sequence $\{T^nu\}$ with $n$ large enough (we calculate to $n=600$) till the sequence converges.
We take the last 100 functions from $\{T^n\omega\}$ as the $\omega$-limit set $\omega_T(u)$. For each $v\in \omega_T
(u)$, we calculate the $L^1$ norm $\|v\|_{L^1}=\int_{-\tau}^0v(t)dt$
and then give one point in Figure \ref{fig:omega}.

Figure \ref{fig:omega} can be thought of as a `bifurcation diagram' of the delay differential equation \eqref{eq:dde1}. When $N(\omega_T)$ has only one point, it means that the system has a globally stable steady state. When $N(\omega_T)$ contains a finite number of points, then either the system has multiple steady states, or there is a periodic solution, with period $q \tau$ for some $q$ a rational number. When $N(\omega_T)$ contains an infinite number of points, then the system either has a periodic solution with period $q\tau$ and $q$ is an irrational number,  or chaotic solutions. Chaotic solutions have been found in many simple delay differential equations \cite{Dor,Mackey:77,Sprott}, including the well known Mackey-Glass equations, which correspond to the case with $\tau=+\infty$ in the present study.  The approach in this study provide a way to study complex behaviors of such systems from the point of view of dynamical systems in a function space.

\begin{acknowledgements}
A part of this work was done when CZ and JL were visiting CND at McGill University in 2010. They are grateful to  Michael Mackey for hosting their visitation, reading the first draft of this paper, and helpful suggestions in writing the manuscript.  JL is supported by the National Natural Science Foundation of China (NSFC 10971113), and the Scientific Research Foundation for the Returned Overseas Chinese Scholars, State Education Ministry (China).  XS is supported by China Postdoctoral Science Foundation Project (20090460337).
\end{acknowledgements}

% BibTeX users please use one of
%\bibliographystyle{spbasic}      % basic style, author-year citations
\bibliographystyle{spmpsci}      % mathematics and physical sciences
\bibliography{ZhugeSunLei2010}   % name your BibTeX data base

\end{document}